\newtheorem {theorem} {Theorem}
\newtheorem {definition}{Definition}
\newtheorem {corollary}{Corollary}
\newtheorem {lemma}  {Lemma}
\newtheorem {remark}{Remark}
\begin{document}


\title[Topology and isochronicity on Hamiltonian systems]{The topology and isochronicity  on complex Hamiltonian systems with homogeneous nonlinearities}

\author{Guangfeng Dong}

\address{Department of Mathematics, Jinan University,
	Guangzhou 510632, Guangdong province, China}


\email{donggf@jnu.edu.cn}
\author{Jiazhong Yang}
\address{ School of Mathematical Sciences, Peking University,  Beijing, 100871,  China}
\email{jyang@math.pku.edu.cn}


\begin{abstract}
In this paper, we  study the Hamiltonian differential systems with  homogeneous nonlinearity parts on $\mathbb{C}^2$. Firstly, we present a series of topological properties of polynomial Hamiltonian functions, with a particular focus on the characteristics of critical points and non-trivial cycles that vanish at infinity.   Secondly, we use these topological properties to derive a complete set of necessary and sufficient conditions for isochronous centers in this class of systems of any degree. Our method avoids tedious computation of the coefficients of normalization occurring in the usual tools to deal with the isochronicity problem.\end{abstract}


\keywords{
isochronous center; vanishing cycle; Hamiltonian system;  linearizability}


\subjclass[2020]{ Primary: 37F75; 32S65;  Secondary: 34M65; 58K45;}


\maketitle

\section{Introduction and the main results}

Consider the following complex planar  Hamiltonian differential systems of the form
\begin{equation}\label{HSH}
\begin{array}{ccrl}
  	\frac{d x}{d t} & =  & H_y(x,y),& 
  	\\ 
\frac{d y}{d t}  &= &  -H_x(x,y),& \quad (x,y)\in\mathbb{C}^2, \quad t\in\mathbb{C},
\end{array}
\end{equation}
where the   Hamiltonian function $H(x,y)$  is a complex polynomial  in its  complex variables  $x$ and $y$ without constant and linear terms, and $H_x(x,y)$ and $H_y(x,y)$ are the partial derivatives of $H(x,y)$ with respect to the variables:  $$H_y(x,y)=\frac{\partial H (x,y)}{\partial y},\ \ H_x(x,y)=\frac{\partial H (x,y)}{\partial x}.$$
In this notation,  the {\it degree} of the system
(\ref{HSH}) is defined as  the  maximum degree of the polynomials  $H_y$ and $ H_x $,
which is  $n$ if the degree of $H(x,y)$ is $n+1$.

It is clear that the origin $O$ is a singular point.
A (non-degenerate) singular point of system \eqref{HSH} corresponds  a (non-degenerate) critical point of Hamiltonian function  $H(x,y)$.  In the case of a non-degenerate critical point, it is  also known as  a complex Morse critical point, which can be described as a point at which the Hessian matrix of $H(x,y)$ is non-degenerate.
In this paper, we assume  that the origin is  non-degenerate.
Thus, up to a change of coordinates, the polynomial $H(x,y)$ can be expressed as $ H(x,y)= xy +h.o.t,$ where $h.o.t$ denotes the terms of higher degree.

Let $A_{H}$ be the set of  atypical values (or non-generic values) of $H(x,y)$ and $H^{-1}(A_{H})$ be the set of points $(x,y)\in \mathbb{C}^2$ such that $H(x,y)$ takes a value in $ A_{H}$. It is well known that $A_{H}$ is a finite set (see, e.g., \cite{Gavri} and references therein) containing all the critical values of $H$.
The polynomial $H(x,y)$ defines a fiber bundle  in the following way: 
\begin{equation*}
	\begin{array}{crcl}
	\mathsf{H}:	 &\mathbb{C}^2-H^{-1}(A_{H}) &\rightarrow &\mathbb{C}-A_{H}, \\
		&(x,y)&\mapsto &H(x,y),
	\end{array}
\end{equation*}
whose fibers are the generic level curves  $L_h=\{(x,y): H(x,y)=h, h\in \mathbb{C}\}$ of the system \eqref{HSH} for $h\not\in  A_H$. For each fiber $L_h $
one can associate a  {\it vanishing cycle} $\gamma_h$ to the critical value $h=0$,
which is a  cycle generating the set of cycles vanishing  at $O$ as  $h\rightarrow 0$ in the first homology group 
$\mathcal{H}_{1}(L_{h},\mathbb{Z})$.  
The vanishing cycle $\gamma_h$  can be characterized by the following purely
topological property:
modulo orientation and the free homotopy deformation on $L_{h}$,
as $h\rightarrow 0$,
the cycle $\gamma_h$ can be represented by a
continuous family of loops on $L_{h}$ of length that tends to zero.
The {\it period function} of system (\ref{HSH}) associated to the origin, denoted by $T(h)$,  is defined as the  integral along $\gamma_h$, i.e., $$T(h)=\oint_{\gamma_h} dt=\oint_{\gamma_h}\frac{dx}{H_y(x,y)},$$ where the 1-form ${dx}/{H_y(x,y)}$, denoted by $\omega$, is called a {\it period 1-form}.

In the case where $T(h)$ is a non-zero constant that is independent of $h$ for all values $h\neq 0$,
 the origin is said to be an \textit{isochronous center}.
This definition coincides with the classical definition of an isochronous center when $(x,y)\in \mathbb{R}^2$ and $t\in \mathbb{R}$.
One of the simplest examples of an isochronous center  can be given by the differential equations $\dot x=y, \ \dot y=-x$ in the real case, since in this
case the period is always $2\pi$, independent of the length of the orbits and $h$. Furthermore,
  each orbit can represent the corresponding vanishing cycle when the system is embedded into $\mathbb{C}^2$ in a natural way.
The aim of this paper is to
 study the necessary and sufficient conditions  for the origin  to be an isochronous center for a class of systems (\ref{HSH}),
which will be specified shortly afterwards.

The study of isochronous centers is a well-established field of research that has attracted significant attention for decades. For the sake of clarity, we will present the Hamiltonian function $H(x,y)$ in the following form:
\begin{equation}
\label{homo}
H(x,y)=xy+  H_{3}(x,y)+ H_{4}(x,y)+\cdots  +H_{n+1}(x,y),\end{equation}
where $ H_{k}(x,y)$  $(k=3,4,\dots, n+1)$  is a homogeneous polynomial of degree $k$ on $\mathbb{C}^2$.
In general, it is extremely challenging to verify whether a system    of a general form (\ref{HSH}) is   isochronous.  
In this aspect, only a limited number of 
results are available, mainly for systems with relatively low degrees or special forms.
Related results together with some discussions for  real systems
can be found in   \cite{Amel,zhang,cima,llibre,Man} and the references therein.

In the above-mentioned Hamiltonian differential systems, there are several papers that concentrate on
the necessary and sufficient conditions for the isochronicity in Hamiltonian systems with homogeneous nonlinearities. These systems can be expressed as follows:
\begin{equation}\label{m}
\begin{array}{ccrl}
  	\frac{d x}{d t} & =  & x+\frac{\partial H_{n+1}}{\partial y}(x,y),& 
  	\\ 
\frac{d y}{d t}  &= &  -y-\frac{\partial H_{n+1}}{\partial x}(x,y),& \quad (x,y)\in\mathbb{C}^2, \quad t\in\mathbb{C},
\end{array}
\end{equation}
with the corresponding Hamiltonian function 
\begin{equation}\label{m-H}
H(x,y)=xy+  H_{n+1}(x,y)=xy+\sum_{j=0}^{n+1}a_{j} x^{n+1-j} y^{j}, \quad  a_j \in \mathbb{C},
\end{equation} in terms of (\ref{homo}).

It is well known that for system (\ref{HSH}), the origin is isochronous
if and only if it is linearizable under an analytic change  of the variables.
In their study of the linearizability problem, 
 the authors of \cite{ChRo} considered the case of $n=2,3$ for $H(x,y)$ of the form (\ref{m-H})
and obtained all the conditions for isochronicity.
 In a recent paper \cite{AGR},  with the help of computer algebra systems, the authors
successfully walked  some  steps  in this direction.
More precisely,    
they   obtained the  necessary and sufficient   isochronicity conditions for
 $n=4, 5, 6, 7$.
Clearly,   for big number $n$,   the calculation will be  tedious and tremendous, and
it is far from trivial, even impossible,  to exhaust all the cases  without
more theoretically powerful techniques.

In this paper, we shall develop a very different approach,  by studying the topology properties of the polynomial  Hamiltonian function \eqref{m-H},
to terminate this problem. For an overview of previous work on the isochronicity problem, which has been approached through the topology of the polynomial Hamiltonian, the reader is directed to reference  \cite{Gavri}.   In the following theorem, we shall  
 provide an explicit expression of   the necessary and sufficient conditions for the isochronicity of  system (\ref{m}) of any degree $n>1$.

\begin{theorem}\label{th-main}
For system (\ref{m}),  the origin is an isochronous center if and only if one of the following two conditions holds:

\begin{enumerate}
  \item[$(\mathrm{I})$] $a_j=0$ for any integer $j$ satisfying $0 \leq j \leq \frac{n+1}{2} $;
  \item[$(\mathrm{II})$] $a_j=0$ for any integer $j$ satisfying  $\frac{n+1}{2} \leq j \leq n +1$.
\end{enumerate}
	
\end{theorem}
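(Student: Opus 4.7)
The strategy rests on the topological characterization of isochronicity established earlier in the paper, combined with the homogeneous structure of $H_{n+1}$. A useful first remark is the involution $(x, y, t) \mapsto (y, x, -t)$: it preserves the form of system \eqref{m} and preserves isochronicity of the origin, while sending $a_j \mapsto a_{n+1-j}$. This swaps conditions (I) and (II), so it suffices to prove sufficiency of (II) and to show that isochronicity forces (II) or its involution image.

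\textbf{Sufficiency.} Under (II), $H_{n+1}$ is divisible by $x^{\lfloor(n+1)/2\rfloor + 1}$. For the simplest subcase $H = xy + a_0 x^{n+1}$, the map $X = x$, $Y = y + a_0 x^n$ is a symplectic analytic diffeomorphism with $XY = H$, giving isochronicity directly. For general $H_{n+1}$ satisfying (II), I would construct the symplectic linearization via a generating function $S(x, Y) = xY + S_{n+1}(x, Y) + \cdots$ solving the Hamilton--Jacobi equation $Y\,\partial_Y S = H(x, \partial_x S)$. At each degree the equation reduces to inverting the Euler operator $Y\partial_Y - x\partial_x$ on monomials $x^p Y^q$, which has non-zero eigenvalue $q - p$ away from the resonant diagonal $p = q$. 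Condition (II) excludes the degree-$(n+1)$ resonant monomial $x^{(n+1)/2}Y^{(n+1)/2}$ (relevant when $n+1$ is even) from $H_{n+1}$, and one verifies inductively that no further resonances arise in the iteration, yielding a convergent linearization. Alternatively, under (II) the generic fiber $L_h$ admits an explicit algebraic parametrization on which the vanishing cycle $\gamma_h$ can be tracked, allowing the period $T(h) = 2\pi i$ to be computed directly as a residue.

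\textbf{Necessity, and the main obstacle.} Assume the origin is isochronous. Compactifying $L_h$ in $\mathbb{CP}^2$, the intersection with the line at infinity consists of the zeros of the binary form $H_{n+1}(x, y) = \prod_i(\beta_i x - \alpha_i y)^{m_i}$, each of which (for $m_i \ge 2$) contributes a singular point on the compactified curve, with cycles of $L_h$ vanishing there as $h \to \infty$. The topological criterion from the earlier sections imposes on isochronicity a precise constraint on the homological combination of these vanishing cycles at infinity together with $\gamma_h$, which forces all but one linear factor of $H_{n+1}$ to coincide with one of the two coordinate lines $\{x = 0\}$ or $\{y = 0\}$, with combined multiplicity exceeding $(n+1)/2$. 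This is exactly condition (I) or (II). The delicate part is pinning down the precise threshold $\lfloor(n+1)/2\rfloor + 1$: it requires a careful accounting of the Milnor numbers at the points at infinity of $\bar{L}_h$, of the genus (and first Betti number) of $L_h$, and of how isochronicity distributes weight among the vanishing cycles at the origin and at infinity. It is here that the topological tools developed earlier in the paper do the essential work, replacing the usual tedious computation of normalization coefficients.
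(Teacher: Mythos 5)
Your sufficiency argument is essentially sound and coincides in spirit with the paper's alternative proof via linearization of resonant vector fields: under condition (II) every monomial of $H_{n+1}$ has $x$-degree strictly larger than $y$-degree, this weight stays positive throughout the normalization iteration, so the resonant diagonal $p=q$ is never hit and the generating-function scheme closes (the paper packages this as the admissibility criterion of Basto-Goncalves; your inductive claim, though asserted rather than checked, is a short verification). The involution $(x,y,t)\mapsto(y,x,-t)$ reducing (I) to (II) is also correct and is the content of the paper's first remark.

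The necessity direction, however, is where the theorem actually lives, and your proposal does not contain an argument for it: it describes the shape of one and explicitly defers the substance to ``the topological tools developed earlier in the paper.'' The missing steps are not routine. First, one must dispose separately of the resonant case ($n$ odd, $a_{(n+1)/2}\neq 0$, killed by the first normal-form obstruction) and of non-isolated singularities, and then work under the assumption that $a_ja_k\neq 0$ for some $j<(n+1)/2<k$. Second, under that assumption one shows by a Newton-polygon computation that the period form $\omega$ has \emph{no} poles at infinity on $\overline{L}_h$ for $h\neq 0$, so $\gamma_h$ cannot be null-homologous on the compactified fiber and therefore admits a dual cycle $\delta_h$ with $\delta_h\cdot\gamma_h\neq0$; using $\lambda^{h}(H)=0$ for $h\neq0$ together with Gavrilov's results one arranges $\delta_h$ to be a vanishing cycle of a finite critical point with critical value $h'\neq0$. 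Third --- the core of the proof --- one computes $\Delta_{\varrho}(\delta_h)$ for the monodromy around $h=0$ via the Picard--Lefschetz formula at the extra critical points on $L_0$ and via variation operators at $\mathrm{P}_x$ and $\mathrm{P}_y$ (where nontrivial cycles vanish as $h\to 0$, with periods $2\pi\mathrm{i}$ and $2\pi\mathrm{i}/(N-1)$), in five separate cases, and checks in each that $\oint_{\Delta_{\varrho}(\delta_h)}\omega\neq\oint_{\delta_h}\omega$, contradicting the analytic continuation of the constant period function. Your proposal replaces all of this with the assertion that the topological criterion ``forces all but one linear factor of $H_{n+1}$ to coincide with one of the two coordinate lines,'' which is not quite what conditions (I)/(II) say (the cofactor $\tilde H_{[\frac n2]}$ is arbitrary and may have many distinct roots), and you attach the vanishing at infinity to $h\to\infty$, whereas the only atypical value relevant here is $h=0$. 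As written, the necessity is not proved.
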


\begin{remark}
We feel that it is worthy  to distinguish between the cases where  $n$ is even or  odd in the
 expression  $a_j$ in the theorem for $j={(n+1)}/{2}$. The precise meaning of these relations  is clear, i.e., this  equation only holds for odd number $n$.

Given that the roles played by the variables  $x$  and $y$ are  identical,  it is evident that the two cases presented in the theorem are, in fact, one and the same.  In other words,
the origin is an isochronous center if and only if the function $H_{n+1}(x,y)$ in system  (\ref{m})
takes one of the following two forms
$$
\begin{array}{rcl}
   H_{n+1}(x,y)&= & y^{[\frac{n+1}{2}]+1} \tilde{H}_{[\frac{n}{2}]}(x,y),  \\ 
   H_{n+1}(x,y)&= & x^{[\frac{n+1}{2}]+1} \tilde{H}_{[\frac{n}{2}]}(x,y),
\end{array}
$$
where $\tilde{H}_{[\frac{n}{2}]}(x,y)$ is a homogeneous polynomial of degree $[\frac{n}{2}]$,
and the conventional notation $[x]$ means  the integer part of $x$.

\end{remark}

\begin{remark}
Note that the isochronous centers occurring  in Theorem \ref{th-main} must not be real centers,
which shows a clear difference between the real and complex settings.
Indeed, as shown in \cite{Chr-Dev,Gasull},  real Hamiltonian systems have no isochronous centers
if they possess only homogeneous nonlinearities.
However, our results  indicate that complex isochronous centers for complex
systems of the same form can generically exist.
\end{remark}

The proof of Theorem \ref{th-main}  totally depends on  the topological properties of the fiber bundle  defined by $H(x,y)=xy+H_{n+1}(x,y)$ and the set of singular  points of system  (\ref{m}). The following theorems will provide a detailed description of these properties. For further details on the subject of singular points and fibre bundles of polynomials, please refer to the reference  \cite{AGV1}.

Recall that a singularity  of a germ of a holomorphic function  $f(x,y)$  is said to be {\emph{simple}} at an isolated critical point of $f(x,y)$, if it belongs to one of the classes $\mathbf{A}_k$, $\mathbf{D}_k$, $\mathbf{E}_6,$ $\mathbf{E}_7,$ and $\mathbf{E}_8$ according to the corresponding Coxeter groups (for further details,  please refer to  Chapter 15 of Volume 1 of \cite{AGV1}). In this paper we focus only on the type $ \mathbf{A}_k$. A polynomial function $f(x,y)\in\mathbb{C}^2$ has type $ \mathbf{A}_k$ with $k\geq 1$ at  an isolated critical point, e.g. the origin,   if and only if $f(x,y)$ can be transformed to $z^2 +w^{k+1}$ by a change of coordinates $(x,y)\mapsto (z,w)$ in a neighborhood of the origin. In the case of a singularity of type $\mathbf{A}_1$, it is equivalent to say that the corresponding critical point is non-degenerate. 


With regard to the isolated critical points, the following theorem is proposed. 
\begin{theorem}\label{th-sim-s}
The polynomial $H(x,y)=xy+H_{n+1}(x,y)$ has type $ \mathbf{A}_k$ at any isolated critical point. In particular, $H(x,y)$ has type $ \mathbf{A}_1$ at any isolated critical point on $L_{0}$.
	
\end{theorem}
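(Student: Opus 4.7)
The strategy rests on a standard fact from Arnold's classification of simple singularities (Chapter~15, Volume~1 of \cite{AGV1}): an isolated critical point of a holomorphic germ on $\mathbb{C}^2$ belongs to the $\mathbf{A}_k$--series for some $k\geq 1$ if and only if the Hessian at that point has rank at least $1$ (equivalently, corank at most $1$). Consequently, for the first assertion it suffices to rule out rank-$0$ Hessians at critical points of $H=xy+H_{n+1}$, and for the second assertion to upgrade this to full rank at critical points lying on $L_{0}$.

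For the first part, I would argue by contradiction. If the Hessian of $H$ vanishes at a critical point $p=(x_0,y_0)$, then
\begin{equation*}
\partial_{xx}H_{n+1}(p)=\partial_{yy}H_{n+1}(p)=0,\qquad \partial_{xy}H_{n+1}(p)=-1,
\end{equation*}
while the critical equations give $\partial_x H_{n+1}(p)=-y_0$ and $\partial_y H_{n+1}(p)=-x_0$. Applying Euler's identity to the degree-$n$ homogeneous polynomial $\partial_x H_{n+1}$ yields $x_0\cdot 0+y_0\cdot(-1)=n(-y_0)$, i.e.\ $(n-1)y_0=0$, and hence $y_0=0$ since $n>1$; the symmetric identity on $\partial_y H_{n+1}$ forces $x_0=0$. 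But at the origin the Hessian of $H$ is $\bigl(\begin{smallmatrix}0&1\\1&0\end{smallmatrix}\bigr)$, of rank $2$ --- a contradiction.

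For the second part, suppose $p=(x_0,y_0)\in L_{0}$ is a critical point. Applying Euler's identity to $H_{n+1}$ together with the critical equations gives $H_{n+1}(p)=-\tfrac{2}{n+1}x_0y_0$, so $0=H(p)=\tfrac{n-1}{n+1}x_0y_0$ forces $x_0y_0=0$. By symmetry assume $x_0=0$; if in addition $y_0=0$, then $p$ is the Morse origin. Otherwise, expanding $H_{n+1}=\sum_{j}a_jx^{n+1-j}y^j$ and evaluating at $(0,y_0)$ collapses the critical equations to $a_n y_0^{n-1}=-1$ and $a_{n+1}=0$; the same expansion yields $\partial_{yy}H_{n+1}(p)=0$ and $\partial_{xy}H_{n+1}(p)=na_n y_0^{n-1}=-n$, so the Hessian determinant at $p$ equals $-(1-n)^2\neq 0$ and $p$ is Morse, i.e.\ of type $\mathbf{A}_1$. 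The main obstacle is arranging the Euler identities at the second-derivative level so that the position $(x_0,y_0)$ drops out cleanly; once that bookkeeping is done, no further input is needed and both assertions follow uniformly in the coefficients $a_j$.
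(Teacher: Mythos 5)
Your proposal is correct and follows essentially the same route as the paper: both arguments combine the critical-point equations with Euler's identities for $H_{n+1}$ and its first partials to bound the corank of the Hessian by $1$ (hence type $\mathbf{A}_k$), and then show that on $L_0$ the relation $x_0y_0=0$ forces the Hessian to be nondegenerate. Your contradiction framing for the first part (ruling out corank $2$ directly, without splitting on whether $h_0=0$) is a slight streamlining of the paper's computation of $Q_{xx}Q_{yy}\neq 0$, but the ingredients are identical.
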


It should be noted that there may be additional isolated critical points 
on $L_0$, specifically, point $O_j$ that is distinct from the origin $O$. In such a case,  it is still non-degenerate by Theorem \ref{th-sim-s}. Denote by $ \gamma_{h,j}$ the   vanishing cycle associated to  $O_j$  with the same orientation to $\gamma_h$ in which $T(h)$ has a form $2\pi \mathrm{i}+o(1)$, where  $o(1)$ represents the infinitesimal part as $h\rightarrow 0$.
Furthermore, the corresponding period function can be defined as the integral of the form  $\oint_{\gamma_{h,j}}\omega$.  It  possesses the following property.

\begin{theorem}\label{th-l0-p}

For system \eqref{m},
 the period function $\oint_{\gamma_{h,j}}\omega$ has a form $ \frac{2\pi \mathrm{i}}{n-1}+o(1) $ at any  isolated singular point  $O_j(\neq O)\in L_0$, in a neighborhood of $h=0$.	
\end{theorem}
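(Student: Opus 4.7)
The plan is to reduce the asymptotics of $\oint_{\gamma_{h,j}} \omega$ to an algebraic computation of the Hessian of $H$ at $O_j$. By Theorem \ref{th-sim-s}, the isolated critical point $O_j$ on $L_{0}$ is of type $\mathbf{A}_1$, hence Morse, so the Morse lemma yields local holomorphic coordinates $(U,V)$ near $O_j$ with $H = UV$. The leading-order value of the period integral is then determined by the Jacobian of this coordinate change, equivalently by the eigenvalues of the linearized Hamiltonian vector field at $O_j$.

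First, I would locate $O_j=(x_j,y_j)$. Combining the critical-point conditions $H_x(O_j)=H_y(O_j)=0$ and the level-set condition $H(O_j)=0$ with Euler's identity $x(H_{n+1})_x+y(H_{n+1})_y=(n+1)H_{n+1}$ collapses to the single constraint $(n-1)x_j y_j=0$. Since $n>1$, at least one coordinate vanishes; by the $x\leftrightarrow y$ symmetry we may assume $y_j=0$ and $x_j\neq 0$. Substituting into $H_x(x_j,0)=0$ forces $a_0=0$, while $H_y(x_j,0)=0$ gives $a_1 x_j^{n-1}=-1$.

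Using these relations, a direct evaluation of the second partials of $H$ at $(x_j,0)$ yields
\begin{equation*}
H_{xx}(O_j)=0,\qquad H_{xy}(O_j)=1-n,\qquad H_{yy}(O_j)=2a_2 x_j^{n-1},
\end{equation*}
so $\det \mathrm{Hess}(H)(O_j)=-(n-1)^2$. Consequently, the linearization of the Hamiltonian vector field at $O_j$ has eigenvalues $\pm(n-1)$.

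Finally, I would apply the standard local period formula at a Morse singularity. Writing $H=UV$ in Morse coordinates near $O_j$ and pulling back the symplectic form as $dx\wedge dy=J(U,V)\,dU\wedge dV$, the matching of quadratic parts gives $|J(O_j)|=1/\sqrt{-\det\mathrm{Hess}(H)(O_j)}=1/(n-1)$. The time form then becomes $\omega=J\,dU/U$ on $L_h$; parameterizing $\gamma_{h,j}$ by $U=\sqrt{h}\,e^{\mathrm{i}\theta}$, $\theta\in[0,2\pi]$, yields $\oint_{\gamma_{h,j}}\omega=2\pi\mathrm{i}\,J(O_j)+o(1)$. The sign is pinned down by the orientation convention stated in the theorem: matching $T(h)=2\pi\mathrm{i}+o(1)$ at $O$ forces $J(O_j)=+1/(n-1)$, giving the claimed expansion. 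The main obstacle I anticipate is exactly this orientation bookkeeping, since the Morse normal form is only unique up to swapping $U\leftrightarrow V$ and the correct branch must be selected to match the chosen orientation of the ambient fibration.
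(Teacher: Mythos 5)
Your proposal is correct and follows essentially the same route as the paper: both locate the extra critical points $(x_j,0)$ via Euler's identity (forcing $a_0=0$ and $1+a_1x_j^{n-1}=0$), extract the linearization eigenvalues $\pm(n-1)$ from the relation $H_{xy}(O_j)=1+na_1x_j^{n-1}=1-n$, and obtain the period $\frac{2\pi\mathrm{i}}{n-1}+o(1)$ as a local residue around the vanishing cycle. Your packaging via the Morse lemma and the Jacobian $J(O_j)=1/\sqrt{-\det\mathrm{Hess}(H)(O_j)}$ is only a cosmetic reformulation of the paper's direct computation $\oint_{\gamma_{h,j0}}\frac{dy}{(n-1)y+h.o.t.}$, and you share with the paper the same light treatment of the orientation bookkeeping.
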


The information provided by the dynamics at finite singular points is insufficient for determining whether the origin is isochronous. 
To gain a complete understanding, it is necessary to examine the behaviour of $L_h$ at infinity. In order to address this issue, we present the following three theorems which describe the properties at  infinity for system \eqref{m} that does not satisfy any one of the conditions in Theorem \ref{th-main}. Denote by  $\overline{L}_h$ the projective closure of a fiber $L_h$ in $\mathbb{CP}^2$.

\begin{theorem}\label{th-non-pole1}
For the system \eqref{m} with polynomial $H(x,y)=xy+H_{n+1}(x,y)$,	if  $a_{j}a_{k}\neq 0$ for two numbers $j,k$ such that $0\leq j< {(n+1)}/{2}<k\leq n+1 $, 
then
 the period $1$-form $\omega$ restricting on $\overline{L}_h$  does not have any pole at infinity  for any $h\neq 0$.	
\end{theorem}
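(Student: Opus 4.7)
The plan is to pass to $\mathbb{CP}^2$ and analyze $\omega$ pointwise at each point of $\overline{L}_h \setminus L_h$ via local Puiseux expansions. In the affine chart $(u,v) = (x/y,\,1/y)$ the projective closure $\overline{L}_h$ is cut out by $uv^{n-1} + P(u) = hv^{n+1}$, where $P(u) := H_{n+1}(u,1) = \sum_{j=0}^{n+1} a_j u^{n+1-j}$, and a short substitution gives the restriction $\omega = v^{n-2}\,dv/(v^{n-1}+P'(u))$. The points at infinity visible in this chart are the pairs $(u_0,0)$ with $P(u_0)=0$; the one missed, $[1:0:0]$, belongs to $\overline{L}_h$ only when $a_0=0$ and is treated symmetrically in the chart $(s,r) = (y/x,\,1/x)$.

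At a root $u_0 \neq 0$ of $P$ of multiplicity $m$, the Newton polygon of the local equation has a single compact face through $(1,n-1)$ and $(m,0)$, so each Puiseux branch admits a uniformizer $t$ with leading behaviour $v \sim t^{m/d}$ and $u-u_0 \sim \zeta\, t^{(n-1)/d}$ for $d=\gcd(m,n-1)$. A direct valuation count yields $\mathrm{ord}_t(v^{n-2}dv) = (n-1)m/d - 1$, while the denominator is dominated by $P'(u) \sim m\,g(u_0)(u-u_0)^{m-1}$ (writing $P(u) = (u-u_0)^m g(u)$), of valuation $(m-1)(n-1)/d$. Hence $\mathrm{ord}_t(\omega) = (n-1)/d - 1 \geq 0$ since $d \mid n-1$, so $\omega$ is already holomorphic at every such $(u_0,0)$, independent of the hypothesis.

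The decisive case is $[0:1:0]$, i.e.\ $(u,v) = (0,0)$, and symmetrically $[1:0:0]$. Suppose $a_{n+1} = 0$ and let $m$ be the multiplicity of $u=0$ as a root of $P$, so $P(u) = a_{n+1-m}\,u^m + O(u^{m+1})$ with $a_{n+1-m} \neq 0$. The hypothesis furnishes some $k > (n+1)/2$ with $a_k \neq 0$, forcing $m \leq n+1-k < (n+1)/2$ strictly. This strict inequality is precisely what collapses the Newton polygon of $uv^{n-1}+P(u)-hv^{n+1}=0$ at $(0,0)$ to the single compact face from $(0,n+1)$ to $(m,0)$ of slope $-(n+1)/m$: the vertex $(1,n-1)$ coming from $uv^{n-1}$ lies strictly above this face exactly when $m < (n+1)/2$, and $h \neq 0$ is needed to keep $(0,n+1)$ extremal. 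Each branch is then parameterized by $v \sim t^{m/d}$, $u \sim \zeta\,t^{(n+1)/d}$ with $d=\gcd(m,n+1)$, and the same kind of valuation count gives $\mathrm{ord}_t(\omega) = (n+1-2m)/d - 1$. Writing $m=kd$ and $n+1=ld$ with $\gcd(k,l)=1$, the bound $m < (n+1)/2$ is equivalent to $l \geq 2k+1$, so $n+1-2m = (l-2k)d \geq d$ and $\mathrm{ord}_t(\omega) \geq 0$. The argument at $[1:0:0]$ is identical, consuming the other half of the hypothesis ($a_j \neq 0$ for some $j < (n+1)/2$).

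The principal obstacle is the combinatorial step at $[0:1:0]$ and $[1:0:0]$: the Newton polygon there changes shape discontinuously at $m=(n+1)/2$. When $m \geq (n+1)/2$ an extra face of slope $-(n-1)/(m-1)$ (or a three-term face when $2m=n+1$) appears, and a parallel valuation count shows that $\omega$ acquires a simple pole on each resulting branch --- as already witnessed by $H_{n+1}(x,y)=y^{n+1}$, which satisfies condition (I) of Theorem~\ref{th-main}. The hypothesis of the present theorem is precisely what excludes these configurations and keeps $\omega|_{\overline{L}_h}$ holomorphic at every point at infinity.
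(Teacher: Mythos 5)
Your proof is correct and follows essentially the same route as the paper: identify the points of $\overline{L}_h$ at infinity, compute the Newton polygon of the local defining equation at each one, and read off the order of $\omega$ from the Puiseux parameterization of each branch, with the hypothesis entering only to force the multiplicity at $[1:0:0]$ and $[0:1:0]$ to be $<(n+1)/2$ (the paper uses the chart $(1/x,y/x)$ after a linear change moving each infinite point to $[1:0:0]$, while you use $(x/y,1/y)$ and its mirror, which is only a cosmetic difference). One small slip: at a root $u_0\neq 0$ of $P$ the compact face of the Newton polygon runs from $(0,n-1)$ (contributed by the term $u_0v^{n-1}$) to $(m,0)$, not through $(1,n-1)$ --- but the parameterization $v\sim t^{m/d}$, $u-u_0\sim t^{(n-1)/d}$ with $d=\gcd(m,n-1)$ that you actually use is the one belonging to the correct face, so your valuation count stands.
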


In \cite{Gavri} the topological index  $\lambda^{h}(H)$ was introduced in two equivalent ways, one of which is outlined below.
Denote by $\mathcal{D}(h,x)$ the discriminant of $H(x,y)-h$ with respect to $y$. Let $deg(h,x)$ be
the degree of $\mathcal{D}(h, x)$ in $x$, and let $deg(x)$ be the degree of $\mathcal{D}(h, x)$ for generic $h$. Then the topological index $\lambda^{h}(H) $ is defined as $ deg(x)-deg(h,x)$. This number is equal to the number of ramification points  of the map $\tau_{h'}$ which tend to infinity as $h'$ tends to $h$, where the map $\tau_{h'}:\ L_{h'}\rightarrow \mathbb{C}$ is defined by $\tau_{h'}(x,y)=x$.
Let  $\lambda_{\mathrm{P}}^{h}(H)$ be the number of ramification points on $L_{h'}$ tending to a point $\mathrm{P}\in \overline{L}_h-L_h$ at infinity as $h'$ tends to $h$. It is evident that  $\lambda^{h}(H)$ can be expressed as the sum of the contributions from each point $\mathrm{P}\in \overline{L}_h-L_h $: $$\lambda^{h}(H)=\sum_{\mathrm{P}\in \overline{L}_h-L_h} \lambda_{\mathrm{P}}^{h}(H).$$

The projective coordinate of $ \mathrm{P}$ in $\mathbb{CP}^2$ is denoted by $[\beta:\alpha:0]$  (see the next section for the definitions of the projective coordinate and the  multiplicity of $\mathrm{P}$). We have the following theorem.

\begin{theorem}\label{th-lambda}
For polynomial $H(x,y)=xy+H_{n+1}(x,y)$, assume $a_{j}a_{k}\neq 0$ for two numbers $j,k$ such that $0\leq j< {(n+1)}/{2}<k\leq n+1 $ and   $\mathrm{P} \in \overline{ L}_h$ is a point at infinity.  Then 
 $ \lambda^{h}_{\mathrm{P}}(H)\neq 0$ if and only if $h=0$ and $\mathrm{P}=[1:0:0]$ or $[0:1:0]$ with multiplicity $N$ such that $1<N<(n+1)/2$.
\end{theorem}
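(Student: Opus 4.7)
The plan is to perform a local asymptotic analysis of the ramification equations $H(x,y) = h'$ and $H_y(x,y) = 0$ near each point $\mathrm{P} \in \overline{L}_h$ at infinity, using affine charts of $\mathbb{CP}^2$, and count the ramification points of $\tau_{h'}$ that tend to $\mathrm{P}$ as $h' \to h$.  In the chart $X \neq 0$ with coordinates $(s,t) = (y/x,\,1/x)$, the equation of $\overline{L}_{h'}$ reads $st^{n-1} + Q(s) - h't^{n+1} = 0$ with $Q(s) = H_{n+1}(1,s) = \sum_j a_j s^j$, and a direct computation gives $H_y = (t^{n-1} + Q'(s))/t^n$.  Eliminating between the two equations yields the compact ramification system
\[
t^{n-1} = -Q'(s), \qquad h' t^{n+1} = Q(s) - s\,Q'(s).
\]
A parallel analysis in the chart $Y \neq 0$ with coordinates $(u,v) = (x/y,\,1/y)$, combined with Euler's identity for the homogeneous polynomial $H_{n+1}$, produces the symmetric system $uv^{n-1} = u\,R'(u) - (n+1)R(u)$ and $h'v^{n+1} = u\,R'(u) - n\,R(u)$, where $R(u) = H_{n+1}(u,1)$.

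Next I would rule out the easy cases.  At an intermediate point $\mathrm{P} = [u_0{:}1{:}0]$ with $u_0 \neq 0$ a root of $R(u)$ of multiplicity $m$, the two right-hand sides share the common leading term $u_0\, m\, c_m\,(u-u_0)^{m-1}$, and dividing them yields $h'\,v^2 \sim u_0$; so $v$ is bounded away from $0$ for $h'$ bounded away from $0$, and tends to infinity (rather than to $0$) as $h' \to 0$, so the associated points actually drift back into the finite affine part.  Either way no ramification point escapes to $\mathrm{P}$, giving $\lambda^{h}_{\mathrm{P}}(H) = 0$.  The same conclusion holds at $\mathrm{P} = [1{:}0{:}0]$ when its multiplicity $N = \min\{j : a_j \neq 0\}$ equals $1$: since $Q'(0) = a_1 \neq 0$, the first equation forces $t$ to approach some nonzero $t_0$ with $t_0^{n-1} = -a_1$, and the nearby ramification points limit to the finite points $(1/t_0, 0)$ on $L_0$ instead of escaping.

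The decisive case is $\mathrm{P} = [1{:}0{:}0]$ with $N \geq 2$, for which the hypothesis automatically forces $N < (n+1)/2$.  Using $Q(s) = a_N s^N + O(s^{N+1})$ and $Q(s) - sQ'(s) = -(N-1)a_N s^N + O(s^{N+1})$, dividing the two ramification equations yields $s = \tfrac{N}{N-1}\,h'\,t^2(1 + o(1))$, and substituting into $t^{n-1} = -Q'(s) \sim -N a_N s^{N-1}$ produces the asymptotic relation
\[
t^{n-2N+1} \sim C\,(h')^{N-1}, \qquad C = -N a_N \Bigl(\tfrac{N}{N-1}\Bigr)^{N-1} \neq 0.
\]
The strict positivity $n - 2N + 1 > 0$ makes this equation admit exactly $n-2N+1$ distinct values of $t$, each tending to $0$ (together with the paired $s$) precisely as $h' \to 0$, while for $h'$ bounded away from $0$ the $|t|$-values remain bounded away from $0$.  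Hence $\lambda^{0}_{[1{:}0{:}0]}(H) \geq n - 2N + 1 > 0$ and $\lambda^{h}_{[1{:}0{:}0]}(H) = 0$ for $h \neq 0$; the $x \leftrightarrow y$ symmetry yields the corresponding statement for $[0{:}1{:}0]$.

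The main technical obstacle I foresee will be confirming that the principal branch exhibited in the last case exhausts all Puiseux-type branches of the ramification system with $(s,t) \to (0,0)$ as $h' \to 0$; this requires a careful Newton polygon analysis of the pair of equations, viewed as polynomials in $(s,t)$ with $h'$ as parameter, in order to exclude any competing asymptotic scalings $s \sim \mathrm{const}\cdot t^{\alpha}$ other than $\alpha = (n-1)/(N-1)$.  With this uniqueness secured, the cases assembled above combine to yield the claim: $\lambda^{h}_{\mathrm{P}}(H) \neq 0$ precisely when $h = 0$ and $\mathrm{P} \in \{[1{:}0{:}0],\ [0{:}1{:}0]\}$ has multiplicity $N$ satisfying $1 < N < (n+1)/2$.
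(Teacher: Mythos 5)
Your proposal is correct and follows essentially the same route as the paper: both eliminate between $H_y=0$ and $H(x,y)=h'$ in affine charts at infinity and read off from the resulting asymptotic relation (your $t^{\,n-2N+1}\sim C\,(h')^{N-1}$ is the paper's equation relating $Y_0^{\,n+1-2N}$ to $h'^{\,n-1}$ in disguise) that ramification points can escape only to $[1{:}0{:}0]$ or $[0{:}1{:}0]$, only when $h=0$, and only when the multiplicity $N$ exceeds $1$, the hypothesis automatically giving $N<(n+1)/2$. The exhaustiveness of Puiseux branches that you flag as the main obstacle is needed only for the exact value of $\lambda^{0}_{\mathrm{P}}(H)$, not for the non-vanishing asserted in the theorem, and the paper is equally brief about the existence of escaping ramification points in the case $h=0$, $N>1$.
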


Denote by  $\mathrm{P}_x=[1:0:0]$ and $\mathrm{P}_y=[0:1:0]$.
Under the assumption of Theorem \ref{th-lambda}, it can be stated that $ \lambda^{h}_{\mathrm{P}}(H)= 0$ for any $h\neq0$ and any $\mathrm{P}\neq \mathrm{P}_x,\ \mathrm{P}_y$. Even though on $L_0$, 
only  $ \lambda^{0}_{\mathrm{P}_x}(H)$ and $ \lambda^{0}_{\mathrm{P}_y}(H)$   may be non-zero, while for any other $\mathrm{P}\in  \overline{L}_0-L_0$, $ \lambda^{0}_{\mathrm{P}}(H)= 0$.

  In this paper,  a cycle in  $ \mathcal{H}_1(L_{h'},\mathbb{Z})$ vanishing at an infinite point $\mathrm{P}$ as $h'\rightarrow h$ means that it vanishes at $\mathrm{P}$ when treated as a cycle on the compact Riemann surface of $\overline{L}_{h'}$ (i.e., a resolution of $\overline{L}_{h'}$) as $h'\rightarrow h$. The equation $\lambda^{h}_{\mathrm{P}}(H)= 0$ implies that there are no  non-trivial cycles in  $ \mathcal{H}_1(L_{h'},\mathbb{Z})$ vanishing at $\mathrm{P}$ as $h'\rightarrow h$,  which are still non-trivial on the compact Riemann surface of $\overline{L}_{h'}$.
  In cases where either  $ \lambda^{0}_{\mathrm{P}_x}(H)\neq 0$ or $ \lambda^{0}_{\mathrm{P}_y}(H)\neq 0$, the polynomial  
 $H$ can be expressed as  $H(x,y)=xy+y^{N}\tilde{H}_{n+1-N}(x,y)$ or $H(x,y)=xy+x^{N}\tilde{H}_{n+1-N}(x,y)$   respectively, where $1<N<(n+1)/2$.  It follows that 
there exist  cycles in  $\mathcal{H}_1({L}_h,\mathbb{Z})$ vanishing at $\mathrm{P}_{x}$ or $\mathrm{P}_{y}$ as $h\rightarrow 0$. These cycles  can be divided  into two classes according to whether they are trivial or not on the Riemann surface of $\overline{L}_h$. Let $G_{\mathrm{P}_{x}}$ ( resp. $G_{\mathrm{P}_{y}}$) be the subgroup of  $\mathcal{H}_1(L_h,\mathbb{Z})$ generated by the cycles that vanish at $\mathrm{P}_{x}$ (resp.  $\mathrm{P}_{y}$) as $h\rightarrow 0$ and  remain non-trivial on the  Riemann surface of $\overline{L}_h$. 
We have   the following result.

\begin{theorem}\label{th-vc-inf-0}
For system 	(\ref{m}),	assume $a_{j}a_{k}\neq 0$ for two numbers $j,k$ such that $0\leq j< {(n+1)}/{2}<k\leq n+1 $.
Then the following statements hold.
\begin{enumerate}
\item[$(\mathrm{I})$] 
If  $H_{n+1}(x,y)=y^{N_1}\tilde{H}(x,y)$ with $\tilde{H}(1,0)\neq 0$ and $1<N_1 <(n+1)/2$, then there are $N_1$ different non-trivial cycles in $ \mathcal{H}_1(L_h,\mathbb{Z})$ generating  $G_{\mathrm{P}_{x}}$. Among those cycles, there is one cycle having  the period of the form  ${2\pi\mathrm{i}}+o(1)$, and for each of the remaining cycles the period  has  the form  $\frac{2\pi\mathrm{i}}{
N_1-1}+o(1)$, in the same orientation to $\gamma_h$.	

  


  \item[$(\mathrm{II})$]  
If  $H_{n+1}(x,y)=x^{N_2}\tilde{H}(x,y)$ with $\tilde{H}(0,1)\neq 0$ and $1<N_2 <(n+1)/2$, then there are $N_2$ different non-trivial cycles in $ \mathcal{H}_1({L}_h,\mathbb{Z})$ generating  $G_{\mathrm{P}_{y}}$. Among those cycles, there is one  cycle having  the period of the form  ${2\pi\mathrm{i}}+o(1)$, and for each of the remaining cycles the period  has  the form  $\frac{2\pi\mathrm{i}}{
N_2-1}+o(1)$, in the same orientation to $\gamma_h$.	

\end{enumerate}

\end{theorem}

 The structure of the paper is organized as follows. In Section 2  we provide some brief preliminaries. In Section 3, we shall prove Theorem \ref{th-sim-s} to Theorem \ref{th-vc-inf-0} and  introduce  additional lemmas on non-isolated singular points and  the intersection form between the vanishing cycles.
 In Section 4, 
 we will present a detailed proof of Theorem \ref{th-main}.

\section{Points at infinity}

For system (\ref{HSH}), if the vanishing cycle associated to a center is homologous to the zero cycle on the Riemann surface of the projective closure $\overline{L}_h$ of a generic level curve $L_h$, then the period $T(h)$ can be calculated 
	 by the residues of the period $1$-form $\omega  $ at its poles on $\overline{L}_h$. It should be noted that all of these poles lie at infinity. In order  to study the infinite points, it is preferable to embed the fibers $L_h$ into the projective space $\mathbb{CP}^2$. In this case,
the projective closure $\overline{L}_h$  can be defined by the following homogeneous equations
$$\sum_{k=2}^{n+1}z^{n+1-k}H_{k}(x,y) -h z^{n+1}=0,\  [x:y:z]\in \mathbb{CP}^2.$$

Rewrite  the homogeneous part $H_{n+1} (x,y)$ as follows:
\begin{equation*}
	H_{n+1} (x,y)= \prod_{j=1}^{m} \left(\alpha_j x - \beta_j y\right)^{n_j},
	\ \ n_j \geq 1, \ \ \sum_{j=1}^{m}n_j =n+1,
\end{equation*}
where $\alpha_j,\ \beta_j \in \mathbb{C}$ such that $\alpha_k :\beta_k\neq \alpha_j :\beta_j$ if $k\neq j.$
The projective coordinate of a point $\mathrm{P}^{j}$ at infinity on $L_{h}$ can be
represented by $[\beta_j :\alpha_j : 0]$ and $n_j$ is said to be its {\it multiplicity}.
Up to a projective change of coordinates,
the point $[\beta_j :\alpha_j : 0]$ can be changed to $[1:0:0]$.
It is therefore convenient to adopt a pair of new affine coordinates, namely  $$(X,Y)=\left(\frac{1}{x},\frac{y}{x}\right),$$
and the Puiseux series near $\mathrm{P}^{j}$ are  fully determined by
the ones for  the equation $H_{h}^{\ast}(X,Y)=0$ near the point $(X,Y)=(0,0)$, where
\begin{equation*}
	H_{h}^{\ast}(X,Y):=X^{n+1} H\left(\frac{1}{X},\frac{Y}{X}\right)-hX^{n+1}.
\end{equation*}

According to the classical  Puiseux's theory,
each branch of an algebraic curve near a singular point can be  parameterized  by a Puiseux series of the following form (see, e.g., \cite{Fischer}).
\begin{lemma}[Puiseux]\label{Pui-par}
	If $H_{h}^{\ast}(0,0)=0$ and $H_{h}^{\ast}(0,Y)\neq 0$,
	then there exist  numbers $\mathsf{p}, \mathsf{q}\in \mathbb{Z}_{+}$,
	a parameter $s\in \mathbb{C}$, and a  holomorphic function
$$\rho(s)=s^{ \mathsf{q}} \left(d_0+\sum_{k= 1}^{+\infty}d_k s^k\right),\quad  d_0 \neq 0,$$
	such that $H_{h}^{\ast}(s^{\mathsf{p}}, s^{ \mathsf{q}} \rho(s))= 0$ for all $s$ in a neighborhood of $0$.
\end{lemma}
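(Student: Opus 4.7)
The plan is to prove this classical Puiseux parametrization by combining the Weierstrass preparation theorem with the Newton--Puiseux algorithm, along the lines of the treatment in Fischer's textbook. First, the hypothesis that $H_h^{\ast}(0,0)=0$ and $H_h^{\ast}(0,Y)\not\equiv 0$ means that $H_h^{\ast}$ is $Y$-regular of some finite order $d$ at the origin, so one can factor $H_h^{\ast}(X,Y)=U(X,Y)\cdot W(X,Y)$ with $U$ a holomorphic unit near $(0,0)$ and $W(X,Y)=Y^{d}+c_{d-1}(X)Y^{d-1}+\cdots+c_{0}(X)$ a distinguished (Weierstrass) polynomial satisfying $c_{i}(0)=0$. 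Since $U$ does not vanish near the origin, the zero locus of $H_h^{\ast}$ coincides locally with that of $W$, so it suffices to parametrize the branches of $\{W=0\}$.

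Second, I would read off a candidate leading exponent from the Newton polygon of $W(X,Y)$. Each edge of the lower boundary with slope $-\mathsf{q}/\mathsf{p}$, taken in lowest terms with $\mathsf{p},\mathsf{q}\in\mathbb{Z}_{+}$, produces a nontrivial characteristic polynomial $\Phi$ obtained by substituting $X=s^{\mathsf{p}}$ and $Y=s^{\mathsf{q}}\eta$ into $W$ and extracting the coefficient of the smallest power of $s$: the resulting identity has the form $s^{N}\Phi(\eta)+O(s^{N+1})=0$, and the nonzero roots of $\Phi$ give the admissible leading coefficients $d_{0}$.

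Third, once a simple nonzero root $d_{0}$ of $\Phi$ is fixed, I would invoke the analytic implicit function theorem, applied to the function $s^{-N}W(s^{\mathsf{p}},s^{\mathsf{q}}\eta)$ at the point $(s,\eta)=(0,d_{0})$, to produce a holomorphic $\eta(s)$ with $\eta(0)=d_{0}$ solving $W(s^{\mathsf{p}},s^{\mathsf{q}}\eta(s))=0$. Writing out the Taylor expansion $\eta(s)=d_{0}+\sum_{k\geq 1}d_{k}s^{k}$ yields a convergent Puiseux parametrization of the shape demanded by the lemma. If $d_{0}$ is instead a multiple root of $\Phi$, the Newton polygon construction is iterated on the shifted equation $W(s^{\mathsf{p}},s^{\mathsf{q}}(d_{0}+\tilde\eta))=0$, composing successive substitutions of the same type; each iteration strictly decreases a local multiplicity invariant attached to the Weierstrass degree, so the process terminates after finitely many steps.

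The main obstacle will be the multiple-root case: one must show that the iteration both terminates and assembles, after composing all of the substitutions, into a single convergent series of the stated form $\rho(s)$ in a genuine uniformizing parameter $s$. I would handle this by tracking the Milnor-type intersection number of $W$ with the $Y$-axis across iterations, verifying that it drops strictly, and then using standard arguments (as in \cite{Fischer}) to patch the finitely many stages into one convergent power series while keeping $\gcd$ conditions on the accumulated $(\mathsf{p},\mathsf{q})$ intact.
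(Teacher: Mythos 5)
The paper does not actually prove this lemma: it is quoted as the classical Puiseux theorem, with a pointer to Fischer's book \cite{Fischer}, so there is no in-paper argument to compare against. Your sketch is exactly the standard Weierstrass-preparation-plus-Newton--Puiseux proof that the citation refers to, and its architecture is right: reduce to a distinguished polynomial $W$ (note $H_h^{\ast}(0,Y)=H_{n+1}(1,Y)$, so the hypothesis does give $Y$-regularity of finite order), read candidate exponents off the Newton polygon, apply the implicit function theorem at a simple nonzero root of the characteristic polynomial, and iterate at a multiple root. The one step you state incorrectly is termination. The Weierstrass degree $d_i$ is only non-increasing under the iteration, not strictly decreasing --- the characteristic polynomial can have a single nonzero root of full multiplicity, so $d_{i+1}=d_i$ is possible, and in general the algorithm does not terminate in finitely many steps at all, since $\rho$ is an infinite series. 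The correct bookkeeping is: if the chosen edge has slope $-\mathsf{q}_i/\mathsf{p}_i$ in lowest terms, the lattice points on it are spaced $\mathsf{p}_i$ apart vertically, so a nonzero root of the characteristic polynomial has multiplicity at most $d_i/\mathsf{p}_i$; hence $d_{i+1}\le d_i/\mathsf{p}_i$ and $\prod_i\mathsf{p}_i\le d$, so only finitely many steps introduce new denominators. After that one is constructing an ordinary power series in $X^{1/\mathsf{p}}$ for a fixed $\mathsf{p}$, and convergence still has to be argued --- either by first reducing to $W$ square-free (divide by $\gcd(W,\partial W/\partial Y)$) so that the multiplicity must eventually drop to $1$ and the implicit function theorem closes the argument, or by a majorant-series estimate. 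Your ``Milnor-type intersection number with the $Y$-axis'' is not a quantity that strictly drops; with the bookkeeping above substituted in, the proof is complete and coincides with the one in \cite{Fischer}.
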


By employing the Puiseux parameterization $x=s^{-\mathsf{p}}, \ y=s^{\mathsf{q}-\mathsf{p}}\rho(s)$ in the period $1$-form $\omega$,
we obtain the following expression
\begin{equation}\label{vf-inf}
	\omega = \frac{ \mathsf{p} s^{-\mathsf{p}-1} \ ds}{ H_y(s^{-\mathsf{p}},s^{\mathsf{q}-\mathsf{p}}\rho(s))}
\end{equation}
on one of branches of $\overline{L}_h$ in a neighborhood of $\mathrm{P}^j$.
This relation allows us to calculate the residues of the 1-form $\omega$ at infinite poles.

The Puiseux parameterization can be completely determined  by the  {\it  Newton polygon} of the singular point.
Given an irreducible polynomial $$F(X,Y)=\sum_{k,l}b_{kl}X^k Y^l,\ \ 
F(0, 0) = 0,$$ we may define  the carrier $\Lambda(F)$ of $F(X,Y)$ as
$\Lambda(F) = \{(k, l) \in \mathbb{Z}^2 \ \vert \  b_{kl}\neq 0\}.$
Assume that $R_1,R_2\in \mathbb{R}^2$ and let 
$$[R_1,R_2]=\{\tau R_1 +(1-\tau)R_2\ \vert \ 0\leq \tau\leq 1\}$$
be the straight line segment from $R_1$ to $R_2$. Furthermore,  
consider the convex subset $E$ on $\mathbb{R}^2$ consisting of those $(X,Y)\in \mathbb{R}^2$ such that $X\geq X_0$ and $Y\geq Y_0$  for some $(X_0,Y_0)\in [R_1,R_2]$ where $R_1,R_2\in \Lambda(F)$.
\begin{definition}[Newton Polygon]

The boundary of set $E$, excluding the axes, is called the {\it Newton polygon}  of $F(X,Y)$ at the origin, which consists  of a finite number of straight line segments.
\end{definition}

\section{Topological properties of the polynomial Hamiltonian function}

This section is dedicated to the proof of the theorems that describe the topological properties of system \eqref{m}, namely, Theorem \ref{th-sim-s} to Theorem \ref{th-vc-inf-0}.
Prior to this, two simple lemmas are introduced.  

\begin{lemma}\label{lem-noiso-n}
For system 	(\ref{m}), if $n$ is odd and $a_{k}\neq 0$ for $k=(n+1)/2$, 
 then the origin cannot be isochronous.
\end{lemma}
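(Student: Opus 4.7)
The plan is to expand the period function $T(h)$ in a power series about $h=0$ and identify the first non-constant coefficient as a nonzero multiple of $a_k$. The strategy uses a single homogeneous rescaling that turns the family $\{L_h\}$ into a one-parameter deformation of a fixed cylinder, reducing the problem to a one-variable residue computation.

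First, I would introduce the substitution $(x,y)=(\sqrt{h}\,\xi,\sqrt{h}\,\eta)$. Because $H_{n+1}$ is homogeneous of degree $n+1$, the equation $H(x,y)=h$ becomes
\[
\xi\eta + \epsilon\, H_{n+1}(\xi,\eta) \;=\; 1, \qquad \epsilon := h^{(n-1)/2}.
\]
Since $n$ is odd, $(n-1)/2$ is a non-negative integer, so $\epsilon$ is a single-valued holomorphic function of $h$ and any branch ambiguity introduced by $\sqrt{h}$ disappears from the final answer. The same substitution transforms the period $1$-form as
\[
\frac{dx}{H_y(x,y)} \;=\; \frac{d\xi}{\xi + \epsilon\, \partial_\eta H_{n+1}(\xi,\eta)},
\]
so that $T(h)=\oint_{C_\epsilon} d\xi/\bigl(\xi+\epsilon\,\partial_\eta H_{n+1}(\xi,\eta)\bigr)$, where $C_\epsilon$ is the image of $\gamma_h$ on the rescaled curve. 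At $\epsilon=0$ the curve $\xi\eta=1$ is a cylinder; the limit $C_0$ coincides with $|\xi|=1$ (the unperturbed vanishing cycle is parameterized by $(\xi,\eta)=(e^{i\theta},e^{-i\theta})$), and therefore $T(0)=2\pi\mathrm{i}$ as expected.

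Next I would compute $dT/d\epsilon$ at $\epsilon=0$. Parameterizing $C_\epsilon$ by $|\xi|=1$ and determining $\eta(\xi,\epsilon)$ implicitly from the equation of the perturbed curve so that $\eta(\xi,0)=1/\xi$, differentiation under the integral gives
\[
\frac{dT}{d\epsilon}\bigg|_{\epsilon=0} \;=\; -\oint_{|\xi|=1} \frac{\partial_\eta H_{n+1}(\xi,1/\xi)}{\xi^2}\, d\xi.
\]
A direct expansion yields $\partial_\eta H_{n+1}(\xi,1/\xi)/\xi^2 = \sum_{j=0}^{n+1} j\, a_j\, \xi^{n-2j}$, and the residue theorem picks out only the term with $n-2j=-1$, i.e.\ $j=(n+1)/2=k$; the hypothesis that $n$ is odd is exactly what makes this a valid summation index. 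Consequently
\[
T(h) \;=\; 2\pi\mathrm{i} \;-\; \pi\mathrm{i}\,(n+1)\, a_k\, h^{(n-1)/2} \;+\; O\!\left(h^{n-1}\right),
\]
and the assumption $a_k\neq 0$ forces the leading correction to be non-zero, so $T(h)\not\equiv 2\pi\mathrm{i}$ and the origin cannot be isochronous.

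The main technical point requiring care will be the justification that $|\xi|=1$ faithfully represents the vanishing cycle $C_\epsilon$ for all small $\epsilon$ and that differentiation under the integral sign is legal. This is a continuity-of-cycles argument: the family $\{\xi\eta+\epsilon H_{n+1}=1\}$ is smooth in $\epsilon$ near $\epsilon=0$, the contour $|\xi|=1$ avoids every ramification point of the projection $(\xi,\eta)\mapsto\xi$ when $|\epsilon|$ is sufficiently small, and the Gauss--Manin connection identifies $C_\epsilon$ with $C_0=\{|\xi|=1\}$ in the first homology of the fiber. Once this continuity is in place, the rest of the argument is a standard residue computation.
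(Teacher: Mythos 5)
Your proposal is correct, but it takes a genuinely different route from the paper. The paper disposes of this lemma in one sentence by normal form theory: since the eigenvalues at the origin are $1,-1$, the only resonant monomial of degree $n$ in the $\dot x$-equation is $x(xy)^{(n-1)/2}$, whose coefficient in $\partial H_{n+1}/\partial y$ is exactly $k a_k$ with $k=(n+1)/2$; a non-zero first resonant coefficient is a formal obstruction to linearization, and linearizability is equivalent to isochronicity. You instead exhibit the non-constancy of $T(h)$ directly: the rescaling $(x,y)=(\sqrt h\,\xi,\sqrt h\,\eta)$ converts $\{H=h\}$ into the $\epsilon$-deformation $\xi\eta+\epsilon H_{n+1}(\xi,\eta)=1$ with $\epsilon=h^{(n-1)/2}$, and a residue computation on $|\xi|=1$ yields
\[
T(h)=2\pi\mathrm{i}-\pi\mathrm{i}(n+1)a_k\,h^{(n-1)/2}+O\!\left(h^{n-1}\right),
\]
which I have checked (the integrand $\partial_\eta H_{n+1}(\xi,1/\xi)/\xi^{2}=\sum_j ja_j\xi^{n-2j}$ has residue $ka_k$ precisely because $n$ is odd, and the sign ambiguity of $\sqrt h$ is killed by the invariance of $H_{n+1}$ under $(\xi,\eta)\mapsto(-\xi,-\eta)$ when $n+1$ is even). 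Your computation is a quantitative refinement of the paper's argument: it identifies $2\pi\mathrm{i}\,ka_k$ as the leading period coefficient rather than merely invoking it as an obstruction, and it stays within the period-function framework used elsewhere in the paper. The price is the technical step you flag yourself: you must justify that the rescaled vanishing cycle is the Gauss--Manin continuation of $|\xi|=1$, which does hold here because the origin is a Morse point (Theorem 2 of the paper), so the vanishing cycle admits representatives of length $O(|h|^{1/2})$ that rescale into a fixed compact region where the family converges to the cylinder $\xi\eta=1$, and the normalization $T(h)=2\pi\mathrm{i}+o(1)$ fixes the orientation. With that point made explicit, your proof is complete; the paper's proof is shorter but leans on the equivalence of isochronicity and linearizability together with the invariance of the first resonant coefficient.
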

\begin{proof} 

Note that, for $k=(n+1)/2$, $k a_k$ is precisely the coefficient of the first resonant term in system \eqref{m}. Consequently, if this coefficient is non-zero, then the linearization procedure  is unable to eliminate this resonant term, which  in turn implies that the origin cannot be isochronous.
\end{proof}

It is possible that  the system \eqref{m} may exhibit  non-isolated singular points. Nevertheless, in such a case, it must  possess a highly specialised form.

\begin{lemma}\label{lem-nonisol}
The system 	(\ref{m}) exhibits non-isolated singular points if and only if $n$ is odd and $H_{n+1}(x,y)=a_{N}x^N y^N$, where $a_N \neq 0$ and $N=(n+1)/2$.
\end{lemma}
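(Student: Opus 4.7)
The plan is to establish both directions of the equivalence by identifying the singular locus of \eqref{m} with the critical set of $H$ and then exploiting Euler's identity for the homogeneous part $H_{n+1}$.

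For the sufficient direction, assume $n$ is odd and $H_{n+1}(x,y)=a_N x^N y^N$ with $N=(n+1)/2$ and $a_N\neq 0$. A direct computation yields $H_x = y\bigl(1+Na_N(xy)^{N-1}\bigr)$ and $H_y = x\bigl(1+Na_N(xy)^{N-1}\bigr)$, which both vanish along the curve $1+Na_N(xy)^{N-1}=0$; this curve is a union of $N-1$ hyperbolas and clearly contains infinitely many points, all of which are singular.

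For the necessary direction, suppose \eqref{m} possesses a one-dimensional irreducible component $C$ of its singular set. Since the singular locus coincides with the critical set of $H$, the differential $dH$ vanishes identically on $C$, so $H$ is constant on $C$. At each point of $C$ the defining equations give $H_{n+1,x}=-y$ and $H_{n+1,y}=-x$; combined with Euler's identity $xH_{n+1,x}+yH_{n+1,y}=(n+1)H_{n+1}$, these force $H_{n+1}=-\tfrac{2xy}{n+1}$ on $C$, and hence $H=\tfrac{n-1}{n+1}xy$ on $C$. Consequently $xy$ itself is constant on $C$, i.e.\ $C\subset\{xy=c\}$ for some $c\in\mathbb{C}$. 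The case $c=0$ is ruled out by observing that $H_x$ and $H_y$ cannot vanish identically on either coordinate axis when $n>1$ (substituting $x=0$ into $H_x$ and $H_y$ produces nontrivial polynomials in $y$). Since $xy-c$ is irreducible for $c\neq 0$, we conclude $C=\{xy=c\}$.

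The key step is then to exploit the divisibilities $xy-c\mid H_x$ and $xy-c\mid H_y$. Substituting $y=c/x$ into $H_x$ and multiplying by $x$ yields the Laurent polynomial identity $c+\sum_{j=0}^{n}(n+1-j)a_j c^j x^{\,n-2j+1}\equiv 0$. Because a nonzero constant $c$ can only be cancelled by the term with exponent $n-2j+1=0$, i.e.\ $j=(n+1)/2$, this simultaneously forces $n$ to be odd, produces the relation $1+Na_Nc^{N-1}=0$ (so $a_N\neq 0$), and makes $a_j=0$ for every $j\neq N$ with $0\le j\le n$. The symmetric substitution into $H_y$ kills the remaining coefficient $a_{n+1}$, giving $H_{n+1}=a_N x^N y^N$ as required. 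The main obstacle is purely bookkeeping: one must set up the exponents in the Laurent expansion so that a single monomial is identified as responsible for cancelling the constant $c$; once that is in place, the coefficient comparison is routine.
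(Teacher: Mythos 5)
Your proof is correct, but it follows a genuinely different route from the paper's. The paper argues algebraically: it writes $H_x=Pf$, $H_y=Qf$ with a common non-constant factor $f$, expands $P,Q,f$ into homogeneous components, derives $xP_j=yQ_j$ inductively, and then applies Euler's identity to the top-degree parts to force $x\,\partial_x H_{n+1}=y\,\partial_y H_{n+1}=\tfrac{n+1}{2}H_{n+1}$, which is only solvable by $a_Nx^Ny^N$ with $n$ odd. You instead argue geometrically: you locate a one-dimensional irreducible component $C$ of the critical locus, use the critical-point equations $H_{n+1,x}=-y$, $H_{n+1,y}=-x$ together with Euler's identity to show $H=\tfrac{n-1}{n+1}xy$ on $C$, conclude from constancy of $H$ on $C$ that $C=\{xy=c\}$ with $c\neq 0$ (the axes being excluded since $H_x|_{x=0}=y+a_ny^n$ and $H_y|_{y=0}=x+a_1x^n$ are not identically zero), and then read off all the coefficient conditions from the Laurent identity $c+\sum_{j=0}^{n}(n+1-j)a_jc^jx^{\,n-2j+1}\equiv 0$. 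The exponents $n-2j+1$ are pairwise distinct, so the nonzero constant $c$ can only be cancelled when $j=(n+1)/2$ is an integer; this forces $n$ odd, $1+Na_Nc^{N-1}=0$ (hence $a_N\neq 0$), and $a_j=0$ for $j\neq N$, with the substitution into $H_y$ killing $a_{n+1}$. Your version buys a sharper geometric picture — it exhibits the non-isolated singular set explicitly as the hyperbolas $xy=c$ with $1+Na_Nc^{N-1}=0$, matching your sufficiency computation — and it sidesteps the paper's somewhat delicate inductive comparison of homogeneous coefficients; the paper's version, in exchange, never needs the Nullstellensatz-type step identifying the component with an irreducible level set.
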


\begin{proof}
Clearly system 	(\ref{m}) has non-isolated singular points if and only if there exist non-constant polynomials $P(x,y)$, $Q(x,y)$ and $f(x,y)$ of degree $m_1,$ $m_2$ and $m_3$ respectively, such that 
\begin{equation}\label{eq-nonisol}
H_x(x,y)=P(x,y)f(x,y),\ \ H_y(x,y)=Q(x,y)f(x,y).	
\end{equation}
Without loss of generality, we can assume that
$$P(x,y)=y+\sum_{j\geq2} P_j(x,y), \ Q(x,y)=x+\sum_{j\geq 2} Q_j(x,y), \  f(x,y)=1+\sum_{j\geq 1} f_j(x,y),$$
 where $P_j, Q_j
$ and $ f_j$ are homogeneous polynomials of degree $j$. By comparing the coefficients of  the two sides of the equations (\ref{eq-nonisol}), one can get the following equalities inductively:
\begin{equation*}
\begin{array}{llll}
P_2 =-y f_1, & P_3=y(f_1^2 -f_2),&...,& P_{m_1}=y((-f_1)^{m_1-1}+...-f_{m_1 -1}),\\ 
Q_2 =-x f_1, & Q_3=x(f_1^2 -f_2),&...,& Q_{m_2}=x((-f_1)^{m_1-1}+...-f_{m_1 -1}).
\end{array}
\end{equation*}
That is, $xP_j=yQ_j$ for any $j\leq \min\{m_1,m_2\}$.
For $m_1 \neq m_2$, it follows that $H_{n+1}$ is one of two possibilities:  either  $H_{n+1}=a_0 x^{n+1}$ or  $H_{n+1}=a_{n+1} y^{n+1}$. This, in turn, implies that $f\equiv 1$, which conflicts to the assumption that $f$ is not a constant.
In the case where $m_1 = m_2$, we have 
$xP_{m_1}f_{m_3}-yQ_{m_2}f_{m_3}=0$ and the Euler's identity $$xP_{m_1}f_{m_3}+yQ_{m_2}f_{m_3}=x\frac{\partial H_{n+1}}{\partial x} +y\frac{\partial H_{n+1}}{\partial y}=(n+1) H_{n+1}. $$
These yield the following equalities:
$$x\frac{\partial H_{n+1}}{\partial x} =y\frac{\partial H_{n+1}}{\partial y}=\frac{(n+1) H_{n+1}}{2}.$$
The above equation holds if and only if  the following two conditions are met: firstly, that $n$ is an odd number; and secondly, that $H_{n+1}=a_{N}x^N y^N$, where $a_N \neq 0$ and $N=(n+1)/2$. 
\end{proof}

The following corollary can be obtained directly from Lemma \ref{lem-noiso-n}  and Lemma \ref{lem-nonisol}.
\begin{corollary}\label{cor-noniso}
	If system \eqref{m} has non-isolated singular points, then the origin is not isochronous.
\end{corollary}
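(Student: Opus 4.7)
The plan is to chain together the two preceding lemmas in a single short deduction. I would first assume, for the sake of applying Lemma \ref{lem-nonisol}, that system \eqref{m} has non-isolated singular points. That lemma pins down the homogeneous part completely: $n$ must be odd and $H_{n+1}(x,y)=a_N x^N y^N$ with $a_N\neq 0$, where $N=(n+1)/2$. In particular, reading off the coefficient in the expansion $H_{n+1}(x,y)=\sum_{j=0}^{n+1} a_j x^{n+1-j}y^j$, this forces $a_{(n+1)/2}\neq 0$.

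Next I would feed this information into Lemma \ref{lem-noiso-n}, whose hypothesis is precisely that $n$ is odd and $a_k\neq 0$ for $k=(n+1)/2$. The conclusion of that lemma, that the origin cannot be isochronous, immediately gives the corollary.

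Since the corollary is a direct two-step composition of the lemmas, there is essentially no obstacle: no auxiliary computation, no estimate, and no topological input are needed beyond what is already packaged inside Lemmas \ref{lem-noiso-n} and \ref{lem-nonisol}. The only thing worth emphasizing in the write-up is the bookkeeping identification between the structural description $H_{n+1}=a_N x^N y^N$ provided by Lemma \ref{lem-nonisol} and the coefficient condition $a_{(n+1)/2}\neq 0$ required by Lemma \ref{lem-noiso-n}, so that the reader sees the hypotheses match exactly.
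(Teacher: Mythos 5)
Your proposal is correct and matches the paper exactly: the paper states that the corollary "can be obtained directly from Lemma \ref{lem-noiso-n} and Lemma \ref{lem-nonisol}," which is precisely the two-step composition you describe, including the identification of $a_N$ with $a_{(n+1)/2}$.
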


Now we shall prove Theorem \ref{th-sim-s}.

\begin{proof}[Proof of Theorem \ref{th-sim-s}]

For the sake of convenience in the notation of partial derivatives, we shall  replace $H_{n+1}$ with $Q$ in this proof.
	   Assume that $O_0=(x_0,y_0)$ is an isolated critical point of $H(x,y)$ on $L_{h_0}$, where $h_0=H(x_0,y_0)$. Clearly $x_0$ and $y_0 $ satisfy the following equations simultaneously:
\begin{equation}\label{eq-sing}
	\begin{array}{c}
	x_0 y_0 +Q(x_0,y_0)=h_0,\ \ x_0 + Q_y(x_0,y_0)=0, \ 	y_0 + Q_x(x_0,y_0)=0.
	\end{array}
\end{equation}
In addition, the following Euler's identities hold:
	\begin{equation}\label{eq-euler}
	\begin{array}{c}
	xQ_x +yQ_y =(n+1)Q,\ \ xQ_{xx}+yQ_{xy}=nQ_{x},\ \ xQ_{yx}+yQ_{yy}=nQ_{y}.
	\end{array}
\end{equation}

In the case of $h_0=0$, equations  (\ref{eq-sing}) and (\ref{eq-euler}) yield that  $x_0 y_0 =0$ and $Q(x_0,y_0)=0$.	The origin is non-degenerate obviously. If $x_0\neq 0$ but $y_0= 0$, then $a_{0}=0,\ a_1\neq 0$ and $x_0$ is a root of the equation $1+ a_1 x^{n-1}=0$. At the point $(x_0,0)$, 
the Hessian matrix   of $H(x,y)$  is given by
\begin{equation*}
\begin{array}{lcl}
	\left(\begin{array}{cc}
 H_{xx} & H_{xy} \\
 H_{yx} & H_{yy}  
\end{array}\right)&=& \left(\begin{array}{cc}
 0 & 1+na_{1}x_0^{n-1}  \\
 1+na_{1}x_0^{n-1} &  2a_{2}x_0^{n-1}  
\end{array}\right)\\&=&\left(\begin{array}{cc}
 0 & -(n-1) \\
 -(n-1) & 2a_{2}x_0^{n-1}  
\end{array}\right),
\end{array}
\end{equation*}
which is non-degenerate. Thus $(x_0,0)$ is a non-degenerate critical point. The similar arguments can be also applied to the case where $x_0= 0$ but $y_0\neq 0$.

 In the  case of $h_0\neq 0$, it is sufficient to demonstrate that the Hessian matrix
	of $H(x,y)$ is not identically zero  at $O_0$, i.e. its rank $\mathsf{r} \geq 1$, which implies that $H(x,y)$  is non-degenerate at $O_0$ for  $\mathsf{r}=2$ and has  type $\mathbf{A}_k,\ k>1$ for  $\mathsf{r}=1$.

	If $H_{xy}(x_0,y_0)=1+Q_{xy}(x_0,y_0)\neq 0$, then the proof is complete. In the case where $H_{xy}(x_0,y_0)=1+Q_{xy}(x_0,y_0)= 0$,  we will show that $$Q_{xx}(x_0,y_0)Q_{yy}(x_0,y_0)\neq 0,$$ which also finishes the proof.

Firstly, noticing that $h_0\neq 0$,  the  equations  (\ref{eq-sing}) yield that $$x_0y_0 ={(n+1)h_0}/{(n-1)}\neq 0.$$ 	
Secondly, by equations (\ref{eq-sing}) and (\ref{eq-euler}), we obtain the following results: 
$$ Q_{xx}(x_0,y_0)=\frac{nQ_{x}(x_0,y_0)+y_0}{x_0}=-\frac{(n-1)y_0}{x_0}\neq 0,$$
and 	
$$ Q_{yy}(x_0,y_0)=\frac{nQ_{y}(x_0,y_0)+x_0}{y_0}=-\frac{(n-1)x_0}{y_0}\neq 0.$$
\end{proof}

The proof of the above lemma enables us to derive the following corollary.

\begin{corollary}\label{cor-l0}
For polynomial $H(x,y)=xy+H_{n+1}(x,y)$,	the  level curve $L_0$ contains two or more critical points if and only if one of the following conditions holds:
	\begin{enumerate}
  \item[$\mathrm{(I)}$] $a_0= 0$ but $a_1\neq 0$;
  \item[$\mathrm{(II)}$] $  a_{n+1}=0$ but $a_n \neq 0$.\end{enumerate}

\end{corollary}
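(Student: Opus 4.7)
The plan is to harvest the case analysis already carried out inside the proof of Theorem \ref{th-sim-s} for $h_0 = 0$ and read off the explicit arithmetic conditions it imposes on the coefficients $a_0,a_1,a_n,a_{n+1}$. First I would observe that the origin $O$ is always a critical point on $L_0$, so asking for two or more critical points on $L_0$ is equivalent to asking for the existence of some critical point $(x_0,y_0) \neq O$ with $H(x_0,y_0) = 0$. By the Euler identity step in the proof of Theorem \ref{th-sim-s}, any such $(x_0,y_0)$ must satisfy $x_0 y_0 = 0$; hence either $y_0 = 0$ with $x_0 \neq 0$, or $x_0 = 0$ with $y_0 \neq 0$.

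In the first branch, I would substitute $y = 0$ into the critical-point equations $x_0 + Q_y(x_0,y_0) = 0$ and $y_0 + Q_x(x_0,y_0) = 0$ with $Q = H_{n+1} = \sum_{j=0}^{n+1} a_j x^{n+1-j} y^j$. The only monomial of $Q_x$ that survives at $y=0$ is the $j=0$ contribution $(n+1)a_0 x^n$, and the only one of $Q_y$ is the $j=1$ contribution $a_1 x^n$. Thus the conditions become
\begin{equation*}
(n+1)\,a_0\, x_0^{\,n} = 0, \qquad a_1\, x_0^{\,n} = -x_0,
\end{equation*}
which, under $x_0 \neq 0$, are equivalent to $a_0 = 0$ and $a_1 \neq 0$ (with $x_0$ any $(n-1)$-th root of $-1/a_1$). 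The condition $H(x_0,0) = a_0 x_0^{n+1} = 0$ is then automatic. This identifies the first branch with condition (I).

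The second branch is completely symmetric under exchanging $x$ and $y$: restricting to $x=0$ kills every monomial of $Q_x$ except $a_n y^n$ and every monomial of $Q_y$ except $(n+1) a_{n+1} y^n$, so the analogous computation forces $a_{n+1} = 0$ and $a_n \neq 0$, which is condition (II). Combining the two branches gives the stated necessary and sufficient characterization; the converse direction is already built in because the values of $x_0$ (resp. $y_0$) produced under (I) (resp. (II)) are genuine extra critical points on $L_0$. There is no real obstacle here — the corollary is essentially a bookkeeping corollary of the $h_0 = 0$ portion of the proof of Theorem \ref{th-sim-s}, and the only mild care needed is to check that the $y=0$ (resp. $x=0$) monomial truncations of $Q_x$ and $Q_y$ really do isolate exactly the coefficients $a_0,a_1$ (resp. $a_n,a_{n+1}$).
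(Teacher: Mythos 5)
Your proposal is correct and follows essentially the same route as the paper, which explicitly derives this corollary from the $h_0=0$ case in the proof of Theorem \ref{th-sim-s}: the Euler-identity step forces $x_0y_0=0$ for any critical point on $L_0$, and restricting to $y_0=0$ (resp.\ $x_0=0$) isolates exactly the conditions $a_0=0$, $a_1\neq 0$ (resp.\ $a_{n+1}=0$, $a_n\neq 0$). Your coefficient bookkeeping for $Q_x$ and $Q_y$ on the axes is accurate, so nothing further is needed.
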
 	

Next we will prove Theorem \ref{th-l0-p}.

\begin{proof}[Proof of Theorem \ref{th-l0-p}]

By Corollary \ref{cor-l0}, it is sufficient to prove that the theorem under the condition  $a_0= 0, a_1\neq 0$ or the condition $  a_{n+1}=0, a_n \neq 0$.

In the case where $a_0= 0$ but $a_1\neq 0$, it follows that for each root $x_j$ of the equation $1+a_1 x^{n-1}=0$,  $(x_j,0)$  is a singular point on $L_{0}$ which is not the origin. By Theorem \ref{th-sim-s}, the  point $(x_j,0)$ is still non-degenerate. Denote by $ \gamma_{h,j0}$ the vanishing cycle associated to  $(x_j,0)$  with the same orientation to $\gamma_h$ in which $T(h)$ has a form $2\pi \mathrm{i}+o(1)$.

In a neighborhood of $(x_j,0)$,	
	the system  (\ref{m}) can be expressed as 
	\begin{equation*}
		\frac{dx}{dt}= -(n-1)x-\frac{2a_2}{a_1}y+h.o.t,\ \ \frac{dy}{dt}= (n-1)y+h.o.t.
			\end{equation*}	
Consequently, by observing the orientation of 		 $\gamma_{h,j0}$, the  period function $$T_{j0}(h):=\oint_{\gamma_{h,j0}}\omega =\oint_{\gamma_{h,j0}}\frac{dy}{(n-1)y+h.o.t}=\frac{2\pi \mathrm{i}}{n-1}+o(1)$$
in a neighborhood of $h=0$.

  In the case where $  a_{n+1}=0$ but $a_n \neq 0$, let $y_j$ be a root of the equation $1+a_n y^{n-1}=0$.  
  Similarly to the  former  case,  we can obtain that  $T_{0j}(h):=\oint_{\gamma_{h,0j}}\omega=\frac{2\pi \mathrm{i}}{n-1}+o(1)$
for the vanishing cycle $  \gamma_{h,0j}$ associated to the singular point $(0,y_j)$.
\end{proof}

The rest of this section is devoted to  the proof of   Theorems \ref{th-non-pole1}, \ref{th-lambda} and \ref{th-vc-inf-0}.

\begin{proof}[Proof of Theorem \ref{th-non-pole1}]

If one of the points at infinity has a projective coordinate $[1:0:0]$, i.e.,
  $H_{n+1}=y^{N}\tilde{H}_{n+1-N}$ has a linear factor $ y$, then  its  multiplicity $N<(n+1)/2$ according to the assumptions of the theorem. 
  
  In the affine chart $(X,Y)=(1/x,y/x)$, the point $[1:0:0]$ is located at the origin. 
  For any $h\neq 0$, 
the corresponding 
Newton polygon of $H^*_{h}(X,Y)=0$ at the origin has  only one straight line segment with two endpoints $(k_1,l_1)=(0,N)$ and $(k_2,l_2)=(n+1,0)$, see the left picture in Figure \ref{fig:NP}. Thus, the Puiseux parameterization of any branch of the origin  can be expressed as follows:
$$ X=s^{N}, \ \ Y=s^{n+1}(d_0+h.o.t),$$
which leads to a local expression of the $1$-form $\omega$ restricted to this branch:
 $$\omega =\frac{ s^{n-2N}ds}{d^{N-1}_0\tilde{H}_{n+1-N}(1,0)+h.o.t}.$$
Due to the facts that $d_0 \tilde{H}_{n+1-N}(1,0)\neq 0$ and $2N\leq n$, the $1$-form  $\omega$ does not possess any pole on any branch of the point $[0:1:0]$. The above arguments are also valid for the point $[0:1:0]$.

If $H_{n+1}$ has a linear factor $\alpha x-\beta  y$ with multiplicity $N$ and $\alpha\beta \neq 0$,
then  a change of coordinates $(\tilde{ x},\tilde{y})=(x, \alpha x -\beta y)$   transforms the point  $[\beta : \alpha :0]$ to $[1:0:0]$ and brings the function $H(x,y)$  to the form $$
F(\tilde{ x},\tilde{y})=\frac{1}{\beta}\tilde{x}\left(\alpha\tilde{x}-\tilde{y}\right)+\tilde{y}^N f(\tilde{ x},\tilde{y}),$$
where $f(1,0)\neq 0$. In the affine chart $(X,Y)=(1/\tilde{ x},\tilde{ y}/\tilde{ x})$, the Newton polygon of the corresponding polynomial   
$$F^{\ast}_{h}(X,Y)=X^{n+1}F\left(\frac{1}{X},\frac{Y}{X}\right)-h X^{n+1}$$ 
  at the origin  also exhibits  a single straight line segment with two endpoints $(k_1,l_1)=(0,N)$ and $(k_2,l_2)=(n-1,0)$, see the right picture in Figure \ref{fig:NP}. Thus, the Puiseux parameterization of any branch has the following form
$$ X=s^{N}, \ \ Y=s^{n-1}(d_0+h.o.t),$$
which gives a local expression of the $1$-form $\omega$ restricted to this branch
as follows: $$\omega =\frac{ s^{n-2}ds}{d^{N-1}_0f(1,0)+h.o.t}.$$
Clearly $d_0 f(1,0)\neq 0$ and $n\geq 2$. Therefore, the $1$-form $\omega$ cannot have a pole on any branch of the point $[\beta : \alpha :0]$.	
 \begin{figure}[htbp]
\centering
	\includegraphics[]{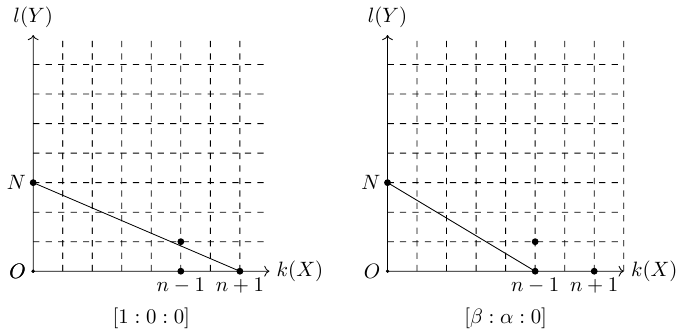}
	\caption{Newton polygon at an infinite point}\label{fig:NP}
\end{figure}   
\end{proof}

To prove Theorem \ref{th-lambda},   recall that $L_{h'}$ is a smooth affine curve defined by  $H(x,y)-h'=0$ for any generic value $h'\not \in A_{H} $. Consequently, a point $(x_0,y_0)\in L_{h'}$ is a ramification point of  $\tau_{h'}$ if and only if    $H_{y}(x_0,y_0)=0$.

\begin{proof}[Proof of Theorem \ref{th-lambda}]

Let $\mathrm{P}=[\beta:\alpha:0]$ be an infinite point with multiplicity $N$. Then
  $H_{n+1}$ has the form $(\alpha x-\beta y)^{N}\tilde{H}_{n+1-N}(x,y)$ with $\tilde{H}_{n+1-N}(\beta,\alpha)\neq 0$. 
  
  We first consider $\beta\neq 0$. Subsequently, we may set  $\beta=1$.    
  Suppose $(x_0,y_0)\in L_{h'}$ is a ramification point of the map $\tau_{h'}$, then we have  $H(x_0,y_0)-h'=0$ and $ H_{y}(x_0,y_0)=0$. If $(x_0, y_0)$ tends to $\mathrm{P}=[1:\alpha:0]$, then $x_0$ tends to infinity. We can therefore assume $x_0\neq 0$.
 Let $X_0=1/x_0$ and $Y_0=(\alpha x_0- y_0)/x_0$. Showing that $(x_0,y_0)$ tends to $\mathrm{P}$ is equivalent to showing that $(X_0,Y_0)$ tends to the origin $(0,0)$. 
 From the equation  $  H_{y}(x_0,y_0)=0$ we obtain the following equality:  
\begin{equation}\label{bif-Y}
	Y_0^{N-1}\left(-N\tilde{H}_{n+1-N} (1,\alpha-Y_0)+Y_0 \frac{\partial\tilde{H}_{n+1-N}}{\partial y}(1,\alpha-Y_0)\right)+X_0^{n-1}=0.
\end{equation}
 
 If $N=1$, then the above equality becomes
 \begin{equation*}
	\left(-\tilde{H}_{n} (1,\alpha-Y_0)+Y_0 \frac{\partial\tilde{H}_{n}}{\partial y}(1,\alpha-Y_0)\right)+X_0^{n-1}=0.
\end{equation*} 
Thus, the point $(X_0,Y_0)$ cannot tend to  $(0,0)$, given that $ \tilde{H}_{n} (1,\alpha)\neq 0$.

 When $N>1$,
 by taking \begin{equation*}
	X_0^{n-1}=Y_0^{N-1}\left(N\tilde{H}_{n+1-N} (1,\alpha-Y_0)-Y_0 \frac{\partial\tilde{H}_{n+1-N}}{\partial y}(1,\alpha-Y_0)\right)
\end{equation*}
into $H(x_0,y_0)-h'= H(1/X_0,(\alpha -Y_0)/X_0)-h'=0$, we get the following equality  
 \begin{eqnarray}\label{bif-lim2}
 \begin{array}{l}
 \left(\left(\alpha N -(N-1)Y_0\right)\tilde{H}_{n+1-N} (1,\alpha-Y_0)-(\alpha-Y_0)Y_0 \frac{\partial \tilde{H}_{n+1-N}}{\partial y}(1,\alpha-Y_0)\right)^{n-1}\\=h'^{(n-1)}Y_0^{2(N-1)}\left(N\tilde{H}_{n+1-N} (1,\alpha -Y_0)-Y_0 \frac{\partial\tilde{H}_{n+1-N}}{\partial y}(1,\alpha-Y_0)\right)^{n+1}.	
 \end{array}
 \end{eqnarray}
 For $\alpha\neq 0$, the point $(X_0,Y_0)$ cannot tend to $ (0,0)$ as $h'\rightarrow h$ for any $h$,  since $\tilde{H}_{n+1-N} (1,\alpha)\neq 0$.
 
    In the case of  $\alpha=0$, one can obtain that $N<(n+1)/2$ from the assumption that there exist two numbers $j<(n+1)/2<k$ such that $a_j a_k\neq 0$.
   Now   the equality (\ref{bif-lim2}) becomes 
  \begin{equation}\label{bif-lim3}
 \begin{array}{rcl}
 &&Y_0^{n+1-2N}\left(\left( -(N-1)\right)\tilde{H}_{n+1-N} (1,-Y_0)+Y_0 \frac{\partial \tilde{H}_{n+1-N}}{\partial y}(1,-Y_0)\right)^{n-1}\\&=&h'^{(n-1)}\left(N\tilde{H}_{n+1-N} (1, -Y_0)-Y_0 \frac{\partial\tilde{H}_{n+1-N}}{\partial y}(1,-Y_0)\right)^{n+1}.	
 \end{array}
 \end{equation} 
 For $h\neq 0$, it is not possible for $Y_0$ to approach $0$ as $h'\rightarrow h$.  If this is not the case, then by letting $h'\rightarrow h$ and $Y_0 \rightarrow0 $ in equation  \eqref{bif-lim2}, we obtain  $\tilde{H}_{n+1-N} \left(1,0\right)=0$, which leads to a contradiction. Therefore, $(x_0,y_0)$ cannot tend to $ \mathrm{P}=\mathrm{P}_{x}=[1:0:0]$ as $h'\rightarrow h$.
 As for $h=0$ and $N>1$, there  indeed exist  non-zero solutions $(X_0,Y_0)$ of equations \eqref{bif-Y} and   \eqref{bif-lim3} that tend to $0$ as $h'\rightarrow 0$. 
 
 So far, we have proven that for $\beta\neq 0$ the number $\lambda^{h}_{\mathrm{P}}(H)\neq 0$ if and only if  $h= 0$ and $\mathrm{P}=\mathrm{P}_{x}$ with multiplicity $N>1$.  
 The same arguments can be also applied to the case of $\beta=0$, i.e., 
  $\mathrm{P}=\mathrm{P}_{y}=[0:1:0]$. In this case, the multiplicity $N$ of $\mathrm{P}_{y}$ also satisfies $1\leq N <(n+1)/2$. We may now assume that $y_0\neq 0$, and let $ (X_0,Y_0)=(x_0/y_0,1/y_0)$. The equality $ H_y(x_0,y_0)=0$ is equivalent to 
\begin{equation*}
 \begin{array}{c}
 Y_0^{n-1}+X_0^{N-1}\frac{\partial\tilde{H}_{n+1-N}}{\partial y}(X_0,1)=0.	
 \end{array}
 \end{equation*}  
For $N=1$,  the above equality indicates that  $(X_0,Y_0)$ cannot tend to $(0,0)$, due to that   $\frac{\partial\tilde{H}_{n}}{\partial y}(0,1)=na_{n}\neq 0$.
For $1<N<(n+1)/2$ we take \begin{equation*}
	Y_0^{n-1}=-X_0^{N-1} \frac{\partial\tilde{H}_{n+1-N}}{\partial y}(X_0,1)
\end{equation*}
into $H(x_0,y_0)-h'= H(X_0/Y_0,1/Y_0)-h'=0$, and obtain the following equality  
 \begin{equation*}
 \begin{array}{lcl}
 &&X_0^{n+1-2N}\left(\frac{\partial \tilde{H}_{n+1-N}}{\partial y}(X_0,1)-\tilde{H}_{n+1-N} (X_0,1) \right)^{n-1}\\&=&h'^{(n-1)}\left( \frac{\partial\tilde{H}_{n+1-N}}{\partial y}(X_0,1)\right)^{n+1}.	
 \end{array}
 \end{equation*}
Noticing that $\frac{\partial\tilde{H}_{n+1-N}}{\partial y}(0,1)=(n+1-N)a_{n+1-N}\neq 0$, it is not difficult to see that there exist points $(X_0,Y_0)$ tending to $ (0,0)$ as $h'$ tends to $ h$ if and only if $h=0$.
The proof of the theorem is finished.
  \end{proof}

 Under the conditions of Theorem \ref{th-main}, one can obtain that  $ \lambda_{\mathrm{P}_x}^0(H)$  or $ \lambda_{\mathrm{P}_y}^0(H)$ is  also equal to $0$, by a parallel analysis to that employed in the proof of  Theorem \ref{th-lambda} with the additional observation that $2N> n+1$.  Namely, the following  lemma holds.

\begin{lemma}\label{lem-lambda}
For polynomial $H(x,y)=xy+H_{n+1}(x,y)$, if $a_j=0$  for any integer $j$ satisfying $0 \leq j \leq (n+1)/2 $
 $($resp. $a_j=0$ for any integer $j$ satisfying  $(n+1)/2 \leq j \leq n +1$$)$, then $ \lambda_{\mathrm{P}_x}^0(H)=0$  $($resp. $ \lambda_{\mathrm{P}_y}^0(H)=0$$)$.	
\end{lemma}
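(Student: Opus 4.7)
The plan is to mimic the analysis of Theorem \ref{th-lambda} for the point $\mathrm{P}_x$, observing that under the lemma's hypothesis the single change in the argument is a sign reversal of the exponent $n+1-2N$ in equation (\ref{bif-lim3}), and that this reversal kills the ramification points rather than producing them.

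First I would unpack what the hypothesis implies about $H_{n+1}$. If $a_j = 0$ for all integers $0 \leq j \leq (n+1)/2$, then $H_{n+1}(x,y) = y^{N} \tilde H_{n+1-N}(x,y)$ with $\tilde H_{n+1-N}(1,0) = a_N \neq 0$, where $N$ is the smallest index with $a_N \neq 0$. A short case check separating $n$ odd (so that $(n+1)/2$ is an integer and $a_{(n+1)/2} = 0$ by hypothesis) from $n$ even shows that $N \geq \lfloor (n+1)/2 \rfloor + 1$, hence $N > (n+1)/2$ and $n+1-2N < 0$ strictly.

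Next I would reproduce the coordinate change $(X_0, Y_0) = (1/x_0, -y_0/x_0)$ and the derivation of equations (\ref{bif-Y}) and (\ref{bif-lim3}) exactly as in the proof of Theorem \ref{th-lambda} with $\alpha = 0$ and $\beta = 1$; since $N > 1$, that derivation passes through verbatim. The resulting identity has the schematic form $Y_0^{n+1-2N} G_1(Y_0)^{n-1} = h'^{\,n-1} G_2(Y_0)^{n+1}$, where $G_1(0) = -(N-1)\tilde H_{n+1-N}(1,0)$ and $G_2(0) = N \tilde H_{n+1-N}(1,0)$ are both nonzero.

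The decisive step, which behaves oppositely to the situation in Theorem \ref{th-lambda}, is the asymptotic analysis near $Y_0 = 0$ and $h' = 0$: because $n+1-2N$ is now negative, the left-hand side tends to $\infty$ as $Y_0 \to 0$ while the right-hand side tends to $0$ as $h' \to 0$, so no ramification point of $\tau_{h'}$ can converge to $\mathrm{P}_x$ as $h' \to 0$, yielding $\lambda^{0}_{\mathrm{P}_x}(H) = 0$. The parenthetical case is handled symmetrically by interchanging $x$ and $y$, giving $\lambda^{0}_{\mathrm{P}_y}(H) = 0$. The main subtlety I would need to watch is the borderline scenario $N = (n+1)/2$, in which $n+1-2N$ vanishes and the asymptotic dichotomy degenerates; the strict inequality $N > (n+1)/2$ established in the first step is precisely what rules this out.
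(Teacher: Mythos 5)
Your proposal is correct and follows essentially the same route as the paper, which proves the lemma precisely by rerunning the analysis of Theorem \ref{th-lambda} with the observation that now $2N>n+1$. Your elaboration of why the sign flip of the exponent $n+1-2N$ in equation \eqref{bif-lim3} forces the two sides of that identity to have incompatible limits as $Y_0\to 0$ and $h'\to 0$ (using $G_1(0)=-(N-1)a_N\neq 0$ and $G_2(0)=Na_N\neq 0$), together with the check that $N>(n+1)/2$ holds strictly for both parities of $n$, is exactly the detail the paper leaves implicit.
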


When $\lambda^{0}(H)\neq 0$,   $H(x,y)$ is not a good polynomial as defined in \cite{Gavri}.
In this case, it is necessary to determine  how the cycles vanishing at infinity affect the period  $T(h)$.	This is the aim of Theorem \ref{th-vc-inf-0}.

\begin{proof}[Proof of Theorem \ref{th-vc-inf-0}]

It is sufficient to prove the  statement $(\mathrm{I})$, given that the statement $(\mathrm{II})$  can be addressed in a similar way. 
In the affine chart $(X,Y)=(1/x,y/x)$, the point $\mathrm{P}_x$ is taken to be the origin.  The system \eqref{m} is then changed to 
\begin{equation} \label{sys-XY}
	\begin{array}{lcl}
		\frac{dX}{dt}&=&-\frac{1}{X^{n-2}}\left(X^{n-1}+Y^{N_1} (N_1 Q(1,Y)+Y Q_y (1,Y))\right),\\ 
		\frac{dY}{dt}&=&-\frac{1}{X^{n-1}}\left(2X^{n-1}Y+(n +1)Y^{N_1}  Q(1,Y))\right),
	\end{array}
\end{equation} 
in a neighborhood of  $\mathrm{P}_x$, where $Q(x,y)={H_{n+1}(x,y)}/{y^{N_1}}$	is a polynomial of degree $n+1-N_1$ with $Q(1,0)\neq 0$ and $Q_y= {\partial Q}/{\partial y}$.
The foliation induced by the vector field (\ref{sys-XY}) can be holomorphically extended to the origin, at which point it undergoes a degenerate singular point. In order to detect the topological structure of a generic leaf  near the origin, we use the method of quasi-homogenous blow-ups. That is, we perform a map 
\begin{equation*}
	\begin{array}{crcl}
	\Phi:&\mathbb{M} \subset \mathbb{C}^2\times \mathbb{CP}&\rightarrow  &	\mathbb{C}^2\\
	&(u,v)&\mapsto & (X,Y)=(u^{N_1 -1},u^{n-1}v),
	\end{array}
\end{equation*}
where $\mathbb{M}$ is a complex 2-dimensional manifold with coordinates $(u,v)$. 
Then system \eqref{sys-XY} is transformed to 
\begin{eqnarray} \label{sys-XY-blow}
	\begin{array}{l}
		\frac{du}{dt}=-\frac{u}{N_1-1}\left(1+N_1 v^{N_1-1} Q(1,u^{n-1}v)+u^{n-1}v^{N_1} Q_y (1,u^{n-1}v)\right),\\ 
		\frac{dv}{dt}=\frac{(n+1-2N_1)v}{N_1 -1}\left(1+v^{N_1-1}Q(1,u^{n-1}v)+\frac{n-1}{n +1-2N_1}u^{n-1} v^{N_1} Q_y(1,u^{n-1}v)\right).
	\end{array}
\end{eqnarray} 

It is not difficult to see that on the exceptional divisor $u=0$ there are exactly $N_1$ singular points $(0,v_0)=(0,0)$ and $\{(0,v_j),\ j=1,...,N_1-1\}$, where $\{v_j,j=1,...,N_1-1\}$ are $N_1 -1$ roots of the equation $1+a_{N_1} v^{N_1-1}=0$ with $a_{N_1}=Q(1,0)\neq 0$. 
More precisely, the origin is a resonant saddle with two eigenvalues $-\frac{1}{N_1-1}$ and $\frac{n+1-2N_1}{N_1-1}$. Similarly, the point $(0,v_j)$ is also a resonant saddle with two eigenvalues $1$ and $-(n+1-2N_1)$  by direct calculation. 

In a neighborhood  of the singular point $(0,v_0)$, the foliation induced by vector field \eqref{sys-XY-blow} is homeomorphic to the one induced by a linear vector field $$\frac{du}{dt}=-\frac{u}{N_1-1},\ \ \frac{dv}{dt}=\frac{(n+1-2N_1)v}{N_1 -1}, $$ 
which possesses a first integral $ u^{n+1-2N_1}v$.  
In addition, system \eqref{sys-XY-blow} has also a polynomial first integral
$$H(u^{-(N_1-1)},u^{n-N_1}v)=u^{n+1-2N_1}v(1+v^{N_1-1}Q(1,u^{n+1-2N_1}v)).$$ 
Thus
the fiber bundle defined by $H(u^{-(N_1-1)},u^{n-N_1}v)$ is locally homeomorphic to the one  defined by monomial $ u^{n+1-2N_1}v$. Note that a generic fiber of $ u^{n+1-2N_1}v$  is  a topological cylinder, it follows that   
 on a generic fiber of $H(u^{-(N_1-1)},u^{n-N_1}v)$, there is exactly one vanishing cycle, denoted by $\vartheta^*_{h, 0}$,  associated to the point $(0,v_0)$ generating the set of cycles vanishing at  $(0,v_0)$.
Similarly, since each point $(0,v_j)$ is an integrable 
resonant saddle with a polynomial first integral and two eigenvalues $1$ and $-(n+1-2N_1)$, a generic  fiber  near $(0,v_j)$ is also locally homeomorphic to a generic fiber of   $ u^{n+1-2N_1}v$, and  there is also exactly one vanishing cycle, denoted by $\vartheta^*_{h, j}$, associated to the point $(0,v_j)$ generating the set of cycles vanishing at $(0,v_j)$.

Let $ \vartheta_{h, j}=\Phi_{*}(\vartheta^{*}_{h, j})$ for  $j=0,1,...,N_1-1$, where $\Phi_{*}$ is a map between the first homology groups induced by $\Phi$ restricted on $\overline{L}_h-\mathrm{P}_x$. Namely, $\Phi_{*}$ maps a cycle represented by a closed loop $l$ to a cycle represented by the image $\Phi(l)$.Then $\vartheta_{h, j}\in G_{\mathrm{P}_x} $ and $\{\vartheta_{h, j}\}$ is a subset of a generating set of $G_{\mathrm{P}_{x}}$.  To  prove that $G_{\mathrm{P}_{x}}$ is exactly generated by $\{\vartheta_{h, j}\}$, we need to show that there are no other non-trivial cycles vanishing at $\mathrm{P}_{x}$. This is equivalent to show that there are no non-trivial cycles vanishing at the infinite point on the exceptional divisor $u=0$. To this end, one can take another quasi-homogenous blow-up $(X,Y)=(\tilde{u}\tilde{v}^{N_1 -1}, \tilde{v}^{n-1})$. Consequently the infinite point on $u=0$ is changed to the origin of the  $(\tilde{u},\tilde{v})$ chart,  and the system \eqref{sys-XY} is transformed to 
\begin{equation*}
	\begin{array}{lcl}
		\frac{d\tilde{u}}{dt}&=&-\frac{n+1-2N_1}{n-1}\tilde{u}\left(Q(1,\tilde{v}^{n-1})+\tilde{u}^{n-1}+\frac{n-1}{n+1-2N_1}\tilde{v}^{n-1} Q_y(1,\tilde{v}^{n-1}) \right), \\
		\frac{d\tilde{v}}{dt}&=&-\frac{n+1}{n-1}\tilde{v}\left(Q(1,\tilde{v}^{n-1})+\frac{2}{n+1}\tilde{u}^{n-1}\right).
	\end{array}
\end{equation*} 
Note that  the above system has a non-degenerate node at the origin. It follows that  each of the corresponding vanishing cycles can be represented by a small loop encircling the origin, which is trivial on the Riemann surface of $\overline{L}_h$. Therefore, $G_{\mathrm{P}_{x}}$ is generated by $\{\vartheta_{h, j}\}$.

It is not difficult to see that the loop representing the cycle $\vartheta_{h,0}$ tends to the branch $y=0$ (i.e., $Y=0$) of $\mathrm{P}_x$ on $L_0$ as $h\rightarrow 0$, whereas the loop representing $\vartheta_{h,j}$ tends to the other one branch  $Y^{N_1-1}-v_j X^{n-1}+h.o.t=0$ of $\mathrm{P}_x$ on $L_0$ for $j\neq 0$. Since there is only one finite singular point on $y=0$, we can obtain that
$ \vartheta_{h,0}$ is homologous to $\gamma_h$ on the Riemann surface of $\overline{L}_h$. That is, the difference between $ \vartheta_{h,0}$  and  $\gamma_h$  in $\mathcal{H}_1(L_h,\mathbb{Z})$ is a cycle represented by a closed loop encircling only one  point $\mathrm{P}_x$ at infinity.

  It should be noted that $\Phi $   is a $(N_1-1)$-to-$1$ orientation-preserving map out of the exceptional divisor $\Phi^{-1}(0)$. 
   Consequently, the period associated to $\vartheta_{h, jk}$ is equal to the period associated to $\vartheta^{*}_{h, jk}$, multiplied  by $1/(N_1-1)$. 
A direct calculation from system \eqref{sys-XY-blow}  indicates that 
 the period associated to $\vartheta^*_{h, 0}$  is equal to $2(N_1 -1)\pi \mathrm{i}+o(1)$ in the same orientation to $\gamma_h$.  Besides,  one can also get that the period associated to $\vartheta^*_{h, j}$ has a form  ${2\pi \mathrm{i}}+o(1)$. 
 Finally, the period of $\vartheta_{h, 0}$ takes  the form $2\pi \mathrm{i}+o(1)$ and the period of $\vartheta_{h, j}$ takes  the form  ${2\pi \mathrm{i}}/{(N_1-1)}+o(1)$.  
The proof of the theorem is finished.
 \end{proof}

Below we study the  variation operator $ \mathcal{V}_{{\varrho}x}$ that is defined on the  relative homology group, denoted by $\mathcal{H}_{1}(U_{x\epsilon},\partial U_{x\epsilon})$, in a small ball $B_{x\epsilon}$ centered at   $\mathrm{P}_x$ in chart $(X,Y)$ induced by a simple counterclockwise loop $ \varrho$ encircling only one critical value $h=0$, where $U_{x\epsilon}=B_{x\epsilon}\cap\overline{L}_h$ and $\partial U_{x\epsilon}$ is the boundary of $ U_{x\epsilon}$. For this,   we consider the corresponding 
variation operator $ \mathcal{V}^*_{{\varrho}x}$ on $\mathcal{H}_{1}(\Phi^{-1}(U_{x\epsilon}),\partial \Phi^{-1}(U_{x\epsilon}))$ induced by  $\varrho$ under the blow-up $\Phi$.

Denote by $\delta \cdot \gamma$ the intersection number between any two cycles  $\delta $ and $ \gamma$. Given a cycle $\delta_h\in \mathcal{H}_{1}(U_{x\epsilon},\partial U_{x\epsilon})$, it can be shown that $ \delta^{*}_h\cdot \vartheta^{*}_{h,j} =(N_1 -1)(\delta_h\cdot \vartheta_{h,j}), $ where $\delta^{*}_h= \Phi_{*}^{-1}(\delta_h)$.
Then, due to the property of the fiber bundle defined by $u^{n+1-2N_1}v$ and the fact that $ \vartheta^*_{h,j}\cdot \vartheta^*_{h,k}=0$ for any $j\neq k$, $j,k=0,1,...,N_1-1$, one can obtain  
$$\mathcal{V}^*_{{\varrho}x}(\delta^{*}_{h}) =-\sum_{j=0}^{N_1-1}(\delta^{*}_{h}\cdot \vartheta^*_{h,j}) \vartheta^*_{h,j}, $$ applying the Picard-Lefschetz formula for each point $(0,v_j)$.
This implies 
\begin{equation}\label{eq-var-px}
	\mathcal{V}_{{\varrho}x}(\delta_{h}) =-\sum_{j=0}^{N_1-1}(\delta_{h}\cdot \vartheta_{h,j}) \vartheta_{h,j}.
\end{equation}
If $\delta_{h}\in \mathcal{H}_{1}(L_h,\mathbb{Z})$ is an absolute cycle, then we can define $ \mathcal{V}_{{\varrho}x}(\delta_{h}):=\mathcal{V}_{{\varrho}x}(\delta_{h}\vert_{B})$, where $\delta_{h}\vert_{B}$ is a relative cycle in $ \mathcal{H}_{1}(U_{x\epsilon},\partial U_{x\epsilon})$ represented by $ \delta_h\cap B_{x\epsilon}$. Thus, the equality \eqref{eq-var-px} also holds for $\delta_{h}\in \mathcal{H}_{1}(L_h,\mathbb{Z})$.

For the point $\mathrm{P}_y$, a similar formula can be obtained  for the variation operator $\mathcal{V}_{{\varrho}y}$ on
the  relative homology group $\mathcal{H}_{1}(U_{y\epsilon},\partial U_{y\epsilon})$ in a small ball $B_{y\epsilon}$ centered at   $\mathrm{P}_y$ in chart $(X,Y)=(x/y,1/y)$ induced by a simple counterclockwise loop $ \varrho$ encircling only one critical value $h=0$, where $U_{y\epsilon}=B_{y\epsilon}\cap\overline{L}_h$.  
That is, we have
\begin{equation}\label{eq-var-py}
	\mathcal{V}_{{\varrho}y}(\delta_{h}) =-\sum_{j=0}^{N_2-1}(\delta_{h}\cdot \varsigma_{h,j}) \varsigma_{h,j},
\end{equation} 
where $\delta_{h}\in \mathcal{H}_{1}(L_h,\mathbb{Z})$  and $ \varsigma_{h,j}$ is  a cycle vanishing at  $\mathrm{P}_y$ defined by a similar way to that of $ \vartheta_{h,j}$. Namely,   $ \{\varsigma_{h,j}\}$ are the corresponding   cycles  satisfying the part $(\mathrm{II})$ of Theorem \ref{th-vc-inf-0}.

At the end of this section, we study the intersection form between the vanishing cycles in $\mathcal{H}_1(L_h,\mathbb{Z})$ and  have the following lemma.

\begin{lemma}\label{lem-l0-int}
For  system (\ref{m}), if there exists a cycle $\delta_h \in\mathcal{H}_1(L_h,\mathbb{Z})$ such that the intersection number $ \delta_h\cdot \gamma_h =m\neq 0$, then the following statments hold:
\begin{enumerate}
  \item[$\mathrm{(I)}$] For $a_0= 0$ but $a_1\neq 0$,  $\sum_{j=1}^{n-1}  (\delta_h\cdot \gamma_{h,j0})  =m$; for $a_0=a_1=\cdots=a_{N_1-1}= 0$ but $a_{N_1}\neq 0$ with $1<N_1<(n+1)/2$, $\delta_h\cdot \vartheta_{h,0}=m$ and  $\sum_{j=1}^{N_1-1}  (\delta_h\cdot \vartheta_{h,j})  =m$.  
  \item[$\mathrm{(II)}$] For $a_{n+1}= 0$ but $a_n\neq 0$, $\sum_{j=1}^{n-1}  (\delta_h\cdot \gamma_{h,0j}) =m$; for $a_{n+1}=a_n=\cdots=a_{n+2-N_2}= 0$ but $a_{n+1-N_2}\neq 0$ with $1<N_2<(n+1)/2$, $\delta_h\cdot \varsigma_{h,0}=m$ and  $\sum_{j=1}^{N_2-1}  (\delta_h\cdot \varsigma_{h,j})  =m$.    \end{enumerate}
\end{lemma}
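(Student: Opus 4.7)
The plan is to derive, in each of the four sub-cases of Lemma \ref{lem-l0-int}, a homological identity in $\mathcal{H}_1(L_h,\mathbb{Z})$ between $\gamma_h$ and the relevant sum of auxiliary vanishing cycles, modulo cycles that have zero intersection with any compact $\delta_h$; the claimed identities then follow by pairing with $\delta_h$. The unifying idea is that on an appropriate ``sheet'' of $L_h$ near a stratum of $L_0$, all relevant vanishing cycles appear as small loops about punctures, and their signed sum is homologous to a loop about a distinguished infinity point of $\overline{L}_h$ that turns out to be null-homologous on the smooth resolution.

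For case $(\mathrm{I})$ with $a_0=0$ and $a_1\neq 0$: since $a_0=0$, the $x$-axis lies in $L_0$, and by Corollary \ref{cor-l0} the critical points of $H$ on it are the origin together with the $n-1$ roots $(x_j,0)$ of $1+a_1 x^{n-1}=0$. Solving $H(x,y)=h$ implicitly for $y$ near $y=0$ yields a single-valued branch $y=y(x)$ with simple poles precisely at $x=0,x_1,\ldots,x_{n-1}$, and this branch compactifies at $\mathrm{P}_x=[1:0:0]$ which, by Theorem \ref{th-lambda} (multiplicity $N=1$ and $\lambda^0_{\mathrm{P}_x}(H)=0$), is a smooth point of the Riemann surface of $\overline{L}_h$. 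The cycles $\gamma_h$ and $\gamma_{h,j0}$ can be represented as small loops on this sheet about $x=0$ and $x=x_j$ respectively, with orientations fixed by the $2\pi\mathrm{i}/(n-1)$ normalizations in Theorem \ref{th-l0-p} and the definition of $\gamma_h$. Their signed sum is homologous to a loop encircling $\mathrm{P}_x$; since $\mathrm{P}_x$ is smooth on $\overline{L}_h$, that loop bounds a disk in the closure and has zero intersection with $\delta_h$. Hence $\sum_{j=1}^{n-1}(\delta_h\cdot\gamma_{h,j0})=\delta_h\cdot\gamma_h=m$.

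For case $(\mathrm{I})$ with $a_0=\cdots=a_{N_1-1}=0$, $a_{N_1}\neq 0$, $1<N_1<(n+1)/2$: the first identity $\delta_h\cdot\vartheta_{h,0}=m$ is immediate from the observation in the proof of Theorem \ref{th-vc-inf-0} that $\vartheta_{h,0}\sim\gamma_h$ on the resolution of $\overline{L}_h$, their difference being a loop in $L_h$ encircling only $\mathrm{P}_x$ and hence pairing trivially with $\delta_h$. For $\sum_{j=1}^{N_1-1}(\delta_h\cdot\vartheta_{h,j})=m$, we transport the sheet argument of the preceding paragraph to the quasi-homogeneous blow-up $\Phi$ of Theorem \ref{th-vc-inf-0}: on the exceptional divisor $u=0$ the $N_1$ integrable resonant saddles $(0,v_j)$ for $j=0,\ldots,N_1-1$ play the role of the finite critical points, and the analysis of the point at infinity of $u=0$ via the second chart $(\tilde u,\tilde v)$ (where the reduced system admits a non-degenerate node) shows that the corresponding closing loop is null-homologous on the smooth resolution. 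Pushing the resulting relation $\sum_{j=0}^{N_1-1}\vartheta^{*}_{h,j}\sim 0$ forward by $\Phi_{*}$ and combining with $\vartheta_{h,0}\sim\gamma_h$ gives the second identity.

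Case $(\mathrm{II})$ is established by the symmetric arguments at the infinity point $\mathrm{P}_y$ in place of $\mathrm{P}_x$, with the roles of $x$ and $y$ exchanged. The main obstacle I anticipate is the careful bookkeeping of orientations: verifying that the $2\pi\mathrm{i}$--normalizations from Theorems \ref{th-l0-p} and \ref{th-vc-inf-0}, together with the $(N_1-1)$-fold branching of $\Phi$ away from its exceptional divisor, combine so that all cycles in the homological identity sum with the \emph{correct} signs, so that both identities in each case acquire coefficient $+m$ rather than a sign-twisted variant. Once orientations are pinned down, the rest of the argument rests on the standard fact that small loops about all punctures of a punctured sphere (or a disk after blow-up) sum to a boundary at infinity.
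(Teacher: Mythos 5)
Your proposal is correct and follows essentially the same route as the paper: the paper likewise represents $\gamma_h$ and the auxiliary cycles as loops about the punctures of the sheet of $L_h$ near the component $\{y=0\}$ of $L_0$ (respectively, about the points $(0,v_j)$ on the exceptional divisor after the blow-up $\Phi$), uses the triviality of the closing loop at $\mathrm{P}_x$ (respectively, at the node in the $(\tilde u,\tilde v)$ chart) together with $\vartheta_{h,0}\sim\gamma_h$ and the factor $N_1-1$ from $\Phi$, and fixes signs by the same period normalizations. The only difference is cosmetic: the paper phrases the key identity as an arc-counting computation of $\delta_0\cdot c_j$ on the punctured sphere rather than as your ``sum of boundary loops is null-homologous'' relation, and it is no more explicit than you are about the final orientation bookkeeping.
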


\begin{proof}
For the first part of conclusion (I), if $a_0=0$ and $a_1\neq 0$, then we can express the Hamiltonian function as $H(x,y)=y(x+H_{n+1}/y)$.  The level curve $L_0$ can be decomposed into two connected components: the line $C_1=\{(x,y):  y=0\}$, and  $C_2=\{(x,y): x+H_{n+1}/y=0\}$. These two components intersect at  the points $\{(x_j,0),j=0,1,..,n-1\}$, where $x_0=0$ and $x_j $ is defined as shown in the proof of Theorem \ref{th-l0-p}. 

Consider the punctured sphere $S_0:=C_1- \{(x_j,0),j=0,1,..,n-1\}$. Near each punctured point $(x_j,0)$, we can choose a sufficiently small circle $c_j$ centered at $(x_j,0)$, which can be regarded as  the projection of the vanishing cycle $\gamma_{h,j0}$ on $C_1$, such that the interior $D_j$ of $c_j$ containing $(x_j,0)$  satisfies $D_j \cap D_k =\emptyset$ for any $j\neq k$. 
The connected part $U_0=S_0 -\cup_{j=0}^{n-1}{D_j}$ can be deformed to a homeomorphic domain  on $L_h$  whose boundary is  the union of the closed loops representing the  vanishing cycles associated to $\{(x_j,0)\}$.

Let $\delta_0 \subset S_0$ be the limit loop of a series of closed loops on $L_h$ representing $\delta_h$  as $h$ tends to $0$ continuously.
By orienting the sequence of loops
 $\{c_j\}$ in the same orientation to $\gamma_h$,  
    the intersection numbers can be calculated by using the intersection form between $\delta_0$ and the orientational circles $\{c_j\}$. 
   More precisely, we have the the following formula: $\delta_h \cdot \gamma_{h,j0}=\delta_0 \cdot c_j$ for all $j=0,1,...,n-1$, where $\gamma_{h,00}=\gamma_h$.

    Note that in this case the point $\mathrm{P}_{x}$ is  normal on $\overline{L}_h$.
   Thus, as $\delta_0$ crosses from $C_1$ to $C_2$, it must pass through the set of  the points  $\{(x_j,0)\}$, thereby through  $\cup_j{D_j}$, 
   see Figure \ref{fig:int-num} for example. 
   More precisely, on one hand,
   the connected components of $\delta_0$  that contribute  non-zero intersection numbers to  $\delta_0 \cdot c_0$ are only those that 
   enter (or leave) the domain 
  $U_0$ through $c_0$ but leave (or enter) $U_0$ through another $c_j$, with $j\neq 0$.  
   On the other hand,
   if a connected part of $\delta_0$ crosses from  one disc $D_j$, with $j\neq 0$ to another disc $D_k$, with $k\neq 0$, avoiding the disc $D_0$,
   then its contribution is $0$ to the total intersection number between $\delta_0$ and $\sum_{j=1}^{n-1}  \gamma_{h,j0}$ 
   (See, e.g., the dashed lines in Figure \ref{fig:int-num}).
 Therefore, 
  the total intersection number between $\delta_0 $ and  $\cup_{j=1}^{n-1}  c_j$  is equal to $\delta_0 \cdot c_0$, and the following equalities hold:
   $$\sum_{j=1}^{n-1}  \delta_{h} \cdot \gamma_{h,j0}   =\sum_{j=1}^{n-1}  \delta_0 \cdot \gamma_{h,j0} =\delta_0 \cdot c_0=\delta_{h}\cdot \gamma_{h}=m,$$
which  leads to the conclusion. 

For the second part of statement (I), recall the proof of Theorem \ref{th-vc-inf-0}, where
it has been demonstrated that the cycle $ \vartheta_{h,0}$ is homologous to $\gamma_h$ on the Riemann surface of $\overline{L}_h$. 
Consequently, we have  $\delta_h \cdot \vartheta_{h,0}=\delta_h \cdot \gamma_h=m$. 
After the blow-up $\Phi$ we consider the 
 sphere $u=0$. Remove $N_1$   points $(0,v_j)$,  $j=0,1,...,N_1-1$ and choose a sufficiently small orientational loop centered at $(0,v_j)$ for every point $(0,v_j)$, which represents the projection of the vanishing cycle $\vartheta^{*}_{h,j}$ on $u=0$, so that they do not intersect with each other nor contain another one in the interior.   
By  employing the similar technique used in the first part,  we can also obtain $$\sum_{j=1}^{N_1-1}  \delta^{*}_h\cdot \vartheta^{*}_{h,j}  =\delta^{*}_h \cdot \vartheta^{*}_{h,0}=(N_1-1)m,$$
 which implies that 
$$\sum_{j=1}^{N_1-1}  \delta_h\cdot \vartheta_{h,j}  =\delta_h \cdot \vartheta_{h,0}=m.$$

   \begin{figure}[htbp]
\centering
	\includegraphics[]{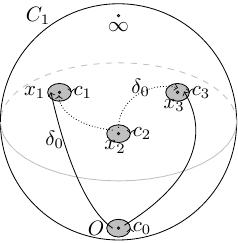}
	\caption{Intersection form on $S_0$ ($n=4$)}\label{fig:int-num}
\end{figure}

The statement  (II) can be proved in the same way. 
\end{proof}

\section{Proof of Theorem \ref{th-main}}

We can now give a detailed proof of  Theorem \ref{th-main}. At the end of the paper, we will provide an alternative  proof for the sufficiency of the conditions using the normal form method.

\begin{proof}[Proof of Theorem \ref{th-main}]

We first prove the necessity.

\noindent {\emph{Necessity}.}

In order to prove the   necessity, we shall show that 	if none of the two conditions in Theorem \ref{th-main} holds, then the origin is not isochronous for   system (\ref{m}). 
By Lemma \ref{lem-noiso-n} and Lemma \ref{lem-nonisol},  it is sufficient  to prove that if there exist two non-zero coefficients $a_{j}$ and $a_{k} $ with two numbers $j,k$ such that $0\leq j< (n+1)/2<k\leq n+1 $, then the origin is not isochronous. Under this hypothesis, each singular point is isolated.

In Theorem \ref{th-non-pole1}, we have shown that the period 1-form $\omega$ does not  possess  any pole at infinity on $\overline{L}_h$ for any $h\neq 0$ (even if $h$ takes another critical value).  This implies that the vanishing cycle $\gamma_h$ cannot be homologous to a zero cycle on the Riemann surface of $\overline{L}_h$. Therefore, there exists a cycle $\gamma'_h\in \mathcal{H}_1(L_h, \mathbb{Z})$ such that the intersection number $\gamma'_h \cdot \gamma_h\neq 0$. By Theorem 2.4 in reference  \cite{Gavri}, we can assume $\gamma'_h$ is also a vanishing cycle associated to a singular point (maybe at infinity). 

 We assert that $\gamma'_h$ must be a vanishing cycle associated to a finite critical point $O'$ of $H(x,y)$ on a level curve $L_{h'}$ with the critical value $h'\neq 0$. The reasons are as follows.
 On one hand,  given that  the origin is non-degenerate, if $\gamma'_h$ vanishes at a point $O'\in\overline{L}_0$ ($O'=O$ or not)   as $h\rightarrow 0$, then the intersection number $\gamma'_h \cdot \gamma_h= 0$, which is in contradiction with the choice of $\gamma'_h$. Thus, $O'\not \in \overline{L}_0$.
  On the other hand, by Theorem \ref{th-lambda} the index $\lambda^{h}(H)=0$ for any atypical value $h\neq 0$. Then by   Theorem 2.4 in \cite{Gavri} again, for any $h\neq 0$, each  cycle in $\mathcal{H}_1(L_h,\mathbb{Z})$ vanishing as $h\rightarrow h'$ can be associated to a finite  critical point of $H(x,y)$ with the critical value $h'$. We have shown that the assertion is indeed true.
 
 From Theorem \ref{th-sim-s}, the polynomial $H(x,y)$ has only simple singularites. Define  
 $$\Delta_{\varrho'}: \mathcal{H}_1(L_h,\mathbb{Z})\rightarrow \mathcal{H}_1(L_h,\mathbb{Z})$$ is the monodromy operator on the first homology group of a generic fiber of $H$  induced by a simple counterclockwise loop $\varrho'$ encircling only one critical value $h'$ with a base point $h\in \mathbb{C}-A_{H}$. Then we have $\Delta_{\varrho'}(\gamma_{h})\cdot \gamma_h \neq 0$. This  can be obtained by repeating the proof of the claim about the crucial non-zero intersection number  occurring in  the proof of Theorem 4.1 in reference \cite{Gavri}. Indeed,   the proof of that claim, rather than the entirety of the theorem,  is also valid for the conditions that are  mildly weaker than those of Theorem 4.1 in \cite{Gavri}. These weaker conditions can be stated as follows: firstly,  the polynomil $H(x,y)$ has only simple singularites; secondly,  the index $\lambda^{h}(H)=0$ for any atypical value $h\neq 0$.
 
 

Let $\delta_h =\Delta_{\varrho'}(\gamma_h)$, $\delta_h\cdot \gamma_h =m \neq 0$, and 
$$\Delta_{\varrho}: \mathcal{H}_1(L_h,\mathbb{Z})\rightarrow \mathcal{H}_1(L_h,\mathbb{Z})$$ be the monodromy operator on the first homology group of a generic fiber of $H$ induced by a simple counterclockwise loop $\varrho$ encircling only one critical value $0$ with the base point $h\in \mathbb{C}-A_{H}$. To show that the origin is not isochronous, it is essential to calculate  $\Delta_{\varrho}(\delta_h)$. 
Although there may be cycles in $\mathcal{H}(L_h,\mathbb{Z})$ represented by the loops encircling only punctured points at infinity, which become trivial when treated as the cycles on the Riemann surface of $\overline{L}_h$, the contribution of those cycles to $\Delta_{\varrho}(\delta_h)$  is zero, since that each of them intersects any other cycle with a zero intersection number. 
Consequently,
the calculation can be divided into the following five cases.

In the first case we assume that $a_0 a_{n+1}\neq 0$. According to Corollary \ref{cor-l0}, the level curve $L_0$ contains a single critical point. In addition, since $\mathrm{P}_{x}, \mathrm{P}_{y} \not \in \overline{L}_h$, by Theorem \ref{th-lambda}, we have $\lambda^{h}(H)=0$ for all $h\in \mathbb{C}$, which means that $H(x,y)$ is a good polynomial as defined in \cite{Gavri}. Then one can use the similar method employed in the proof of Theorem 4.1 in \cite{Gavri} to obtain that 
\begin{equation*}
\begin{array}{lcl}
\Delta_{\varrho}(\delta_h)	&=& \delta_h -(\delta_h\cdot \gamma_h) \gamma_h \end{array}
\end{equation*} 
 by       the Picard-Lefschetz formula at the origin.
 Consequently, 
\begin{equation}\label{eq-delta-1}
\begin{array}{lcl}
\oint_{\Delta_{\varrho}(\delta_h)}\omega	&=& \oint_{\delta_h}\omega -m \oint_{\gamma_h} \omega.
\end{array}
\end{equation} 
 
 The second case is characterised by two dual conditions: $a_0=0$ and $a_1a_{n+1}\neq 0$,  or $a_{n+1}=0$ and $a_0a_{n}\neq 0$.
  We shall focus on the former condition; the latter one can be dealt with in a similar way, with the positions of $x$ and $y$ merely interchanged.
 In this case $\mathrm{P}_y\not\in \overline{L}_h$ and $\lambda^{h}(H)=0$ for any $h$ by Theorem \ref{th-lambda}. It thus follows that  there are no non-trivial cycles vanishing at infinity.
 However, the level curve $L_0$  also contains   $n-1$ finite critical points $ \{(x_j,0),j=1,...,n-1\}$ except the origin as shown in the proof of Theorem \ref{th-l0-p}. Furthermore, each of these points  is non-degenerate due to Theorem \ref{th-sim-s}. Then  the monodromy operator $\Delta_{\varrho}$ can be obtained  from the   variation operations associated to those critical points on $L_0$ that can be calculated by the Picard-Lefschetz formula one by one. Namely, we have 
  \begin{equation*}
\begin{array}{lcl}
\Delta_{\varrho}(\delta_h)	= \delta_h -(\delta_h\cdot \gamma_h) \gamma_h - \sum_{j=1}^{n-1} (\delta_h\cdot \gamma_{h,j0}) \gamma_{h,j0} ,	
\end{array}
\end{equation*} 
and
\begin{equation*}
\begin{array}{lcl}
\oint_{\Delta_{\varrho}(\delta_h)}\omega	= \oint_{\delta_h}\omega -m \oint_{\gamma_h} \omega - \sum_{j=1}^{n-1} (\delta_h\cdot \gamma_{h,j0}) \oint_{\gamma_{h,j0}}\omega .
\end{array}
\end{equation*} 
By Theorem \ref{th-l0-p} and Lemma \ref{lem-l0-int}, one can obtain
\begin{equation}\label{eq-delta-2}
\begin{array}{lcl}
\oint_{\Delta_{\varrho}(\delta_h)}\omega 
&=&\oint_{\delta_h}\omega-m \oint_{\gamma_h} \omega-\left(\sum_{j=1}^{n-1} \delta_h\cdot \gamma_{h,j0}\right) \frac{2\pi \mathrm{i}}{n-1}+o(1)\\ 
&=&\oint_{\delta_h}\omega-m\oint_{\gamma_h} \omega-\frac{2m\pi \mathrm{i}}{n-1}+o(1).
\end{array}
\end{equation}

The third case is that $a_0=a_{n+1}=0$ but $a_1 a_{n} \neq0$. 
   Similarly to the second case, there are no non-trivial cycles vanishing at infinity.
  In this case, however, there are  $2(n-1)$ finite critical points $ \{(x_j,0),j=1,...,n-1\}$ and $ \{(0,y_j),j=1,...,n-1\}$ on $L_0$ except the origin as shown in the proof of Theorem \ref{th-l0-p}. Each of these points is non-degenerate due to Theorem \ref{th-sim-s}. Thus,  the monodromy operator $\Delta_{\varrho}$ can be obtained  from the sum of all variation operations associated to the critical points on $L_0$, each of which can be calculated by the Picard-Lefschetz formula. Namely, we have     
\begin{equation*}
\begin{array}{lcl}
\Delta_{\varrho}(\delta_h)	&=& \delta_h -(\delta_h\cdot \gamma_h) \gamma_h  \\&&- \sum_{j=1}^{n-1} \left((\delta_h\cdot \gamma_{h,j0}) \gamma_{h,j0} + (\delta_h\cdot \gamma_{h,0j}) \gamma_{h,0j}\right)	,
\end{array}
\end{equation*} 
and 
\begin{equation*}
\begin{array}{lcl}
\oint_{\Delta_{\varrho}(\delta_h)}\omega	&=& \oint_{\delta_h}\omega -m \oint_{\gamma_h} \omega  \\&&- \sum_{j=1}^{n-1} \left((\delta_h\cdot \gamma_{h,j0}) \oint_{\gamma_{h,j0}}\omega + (\delta_h\cdot \gamma_{h,0j}) \oint_{\gamma_{h,0j}}\omega\right).
\end{array}
\end{equation*} 
By Theorem \ref{th-l0-p} and Lemma \ref{lem-l0-int} again, one can get that 
\begin{equation}\label{eq-delta-3}
\begin{array}{rcl}
\oint_{\Delta_{\varrho}(\delta_h)}\omega	 &=&\oint_{\delta_h}\omega-m \oint_{\gamma_h} \omega\\&&-\left(\sum_{j=1}^{n-1} \left(\delta_h\cdot \gamma_{h,j0}+\delta_h\cdot \gamma_{h,0j}\right)\right) \frac{2\pi \mathrm{i}}{n-1}+o(1)\\ 
&=&\oint_{\delta_h}\omega-m \oint_{\gamma_h} \omega-\frac{4m\pi \mathrm{i}}{n-1}+o(1).
\end{array}
\end{equation}

The fourth case also contains  two dual conditions: 
\begin{itemize}
  \item $a_0=a_1=\cdots =a_{N_1-1}=0$, $a_{N_1}\neq 0$, $1<N_1<(n+1)/2$ and $a_{n+1}=0$, $a_{n}\neq 0$;
    \item  $a_{n+1}=a_n=\cdots =a_{n+2-N_2}=0$, $a_{n+1-N_2}\neq 0$, $1<N_2<(n+1)/2$ and $a_{0}=0$, $a_{1}\neq 0$.
\end{itemize}
We are only concerned with the former condition (the latter one may be addressed similarly, with only a shift in the positions of $x$ and $y$).
In this case, there are not only $n-1$ finite critical points $\{(0,y_j)\}$ on $L_0$ except the origin, but also a  point $\mathrm{P}_x$ at infinity with  $\lambda^{0}_{\mathrm{P}_x}\neq 0$. For any other point $\mathrm{P}$ at infinity, the index $\lambda^{0}_{\mathrm{P}}= 0$. By Theorem \ref{th-vc-inf-0}, the subgroup $G_{\mathrm{P}_x}$ is non-trivial, i.e., there are  non-trivial cycles vanishing at the point $\mathrm{P}_x$.
Thus the monodromy operator $\Delta_{\varrho}$ can be obtained  from the variation operations associated to those singular points $\{(0,y_j)\}$, which can be calculated by Picard-Lefschetz formula one by one, and the variation operation $\mathcal{V}_{\varrho x}$ at $\mathrm{P}_x$. Namely, we have     
\begin{equation*}
\begin{array}{lcl}
\Delta_{\varrho}(\delta_h)	= \delta_h -(\delta_h\cdot \gamma_h) \gamma_h - \sum_{j=1}^{n-1} ( \delta_h\cdot \gamma_{h,0j})	 \gamma_{h,0j}+\mathcal{V}_{\varrho x}(\delta_h),
\end{array}
\end{equation*} 
and
\begin{equation*}\label{P-L-p}
\begin{array}{lcl}
\oint_{\Delta_{\varrho}(\delta_h)}\omega	= \oint_{\delta_h}\omega -m \oint_{\gamma_h} \omega - \sum_{j=1}^{n-1} (\delta_h\cdot \gamma_{h,0j}) \oint_{\gamma_{h,0j}}\omega+\oint_{\mathcal{V}_{\varrho x}(\delta_h)}\omega .
\end{array}
\end{equation*} 
In addition, one can also obtain that  $$\sum_{j=1}^{n-1} \left( \left(\delta_h\cdot \gamma_{h,0j}\right) \oint_{\gamma_{h,0j}}\omega\right) =\frac{2m \pi \mathrm{i}}{n-1}+o(1),$$
by Theorem \ref{th-l0-p} and Lemma \ref{lem-l0-int}, and that $$ \oint_{\mathcal{V}_{\varrho x}(\delta_h)}\omega=-\frac{2mN_1\pi \mathrm{i}}{N_1-1} +o(1),$$
 by Theorem \ref{th-vc-inf-0}, Lemma \ref{lem-l0-int} and equation \eqref{eq-var-px}.
Therefore, the following equality holds:
  \begin{equation}\label{eq-delta-4}
\begin{array}{c}
\oint_{\Delta_{\varrho}(\delta_h)}\omega	= \oint_{\delta_h}\omega -m \oint_{\gamma_h} \omega - \frac{2m \pi \mathrm{i}}{n-1}-\frac{2mN_1\pi \mathrm{i}}{N_1-1} +o(1).
\end{array}
\end{equation}

In the fifth case, both of the following  two conditions hold simultaneously: $$a_0=a_1=\cdots =a_{N_1-1}=0, \ a_{N_1}\neq 0,\ 1<N_1<\frac{n+1}{2},$$ and $$a_{n+1}=a_n=\cdots =a_{n+2-N_2}=0, \ a_{n+1-N_2}\neq 0, \ 1<N_2<\frac{n+1}{2}.$$ 
 Corollary \ref{cor-l0} indicates that the level curve $L_0$ contains a single critical point which is the origin.  
By Theorem \ref{th-lambda} and  Theorem \ref{th-vc-inf-0},  the  non-trivial cycles vanishing at infinity consist of  those that vanish at $\mathrm{P}_x$ and $\mathrm{P}_y$, respectively, as $h\rightarrow 0$.
In this case, the monodromy operator $\Delta_{\varrho}$ can be obtained  from the variation operator at the origin and two variation operators $\mathcal{V}_{\varrho x}$ and $\mathcal{V}_{\varrho y} $ at  points $\mathrm{P}_x$ and $\mathrm{P}_y$. Consequently, we have
\begin{equation*}
\begin{array}{lcl}
\Delta_{\varrho}(\delta_h)	&=& \delta_h -(\delta_h\cdot \gamma_h) \gamma_h +\mathcal{V}_{\varrho x}(\delta_h)+\mathcal{V}_{\varrho y}(\delta_h),
\end{array}
\end{equation*} 
and 
\begin{equation}\label{eq-delta-5}
\begin{array}{lcl}
\oint_{\Delta_{\varrho}(\delta_h)}\omega	&=& \oint_{\delta_h}\omega -m \oint_{\gamma_h} \omega +\oint_{\mathcal{V}_{\varrho x}(\delta_h)}\omega +\oint_{\mathcal{V}_{\varrho y}(\delta_h)}\omega\\ 
&=& \oint_{\delta_h}\omega -m \oint_{\gamma_h} \omega-\frac{2mN_1\pi \mathrm{i}}{N_1-1}-\frac{2mN_2\pi \mathrm{i}}{N_2-1}+o(1),
\end{array}
\end{equation} 
by Theorem \ref{th-vc-inf-0}, Lemma \ref{lem-l0-int} and equations \eqref{eq-var-px} and \eqref{eq-var-py}.

Suppose the origin is an isochronous center, then the analytic continuation of the period function $T(h)=\oint_{\gamma_h}\omega$ along the loops $\varrho$ and $\varrho'$
gives  that
\begin{equation}\label{eq-conti}
\oint_{\Delta_{\varrho}(\delta_h)}\omega=\oint_{\delta_h}\omega	= \oint_{\gamma_h}\omega=2\pi\mathrm{i}.	
\end{equation}
By applying  equations \eqref{eq-conti} to equations \eqref{eq-delta-1} to \eqref{eq-delta-5}, respectively, it becomes evident that none of them holds. This leads to a contradiction in each case, which proves that the origin cannot be isochronous under the hypothesis that there exist two non-zero coefficients $a_{j}$ and $a_{k} $ with two numbers $j,k$ such that $0\leq j< (n+1)/2<k\leq n+1 $.
.  
The necessity part has been completed. 
\\  
\\
  \noindent {\emph{Sufficiency}.} 
	
	 We first prove the sufficiency of the condition (I). 	 Rewrite $$H(x,y)=xy + y^N \tilde{H}(x,y),$$ where
 $\tilde{H}(x,y)$ is a homogeneous polynomial of degree $n+1-N$, $N> (n+1)/2$, with $\tilde{H}(x,0)=a_{N}x^{n+1-N}\neq 0$.
The  level curve $L_0$ consists of two components given by the following equations respectively:  $$y=0 \quad {\rm{and}}\quad  x+y^{N-1}\tilde{H}(x,y)=0.$$ 
These two components  intersect at a single finite point $(x,y)=(0,0)$ and  at a single  point $\mathrm{P}_x$ at infinity with the projective coordinate $[1:0:0 ]$ in ${\mathbb{CP}^2}$. 

 In the affine chart   $(X,Y)=(1/x,y/x)$,  the point $\mathrm{P}_x$  is changed to the origin that is a singular point of the algebraic curve  defined by
 \begin{equation*}
 	H_{h}^{\ast}(X,Y)=Y^{N}\sum_{j=N}^{n+1} a_j Y^{j-N} +X^{n-1}Y -h X^{n+1}=0.	
 \end{equation*}
In the case where $h\neq 0$, the Newton polygon of the origin consists of two distinct line segments (see the left picture in Figure \ref{NP} below). One of these segments has a slope $-1/2$ and two endpoints $(k,l)=(n-1,1)$ and $(k,l)=(n+1,0)$. Consequently,  it determines a branch of the origin with a Puiseux parameterization  
 \begin{align} \label{1-2-P}
 \begin{array}{c}
X=s,\  Y=s^2(h+h^{N} s^{2N-n-1}+\sum_{j> 2N-n-1} d_j(h) s^j), 
 \end{array} 	
 \end{align}
where $d_j(h)$ is a polynomial of $h$ with zero constant term for all $j>2N-n-1$. Note that this branch continuously tends  to the branch  $Y=0$ of $\overline{L}_0$ as $h\rightarrow 0$. Another segment passing through two endpoints $(k,l)=(n-1,1)$ and $(k,l)=(0,N)$ allows  the corresponding Puiseux parameterization of another branch of the origin 
\begin{equation*} 
 \begin{array}{c}
X=s^{N-1},\  Y=s^{n-1}(d_0+\sum_{j\geq 1} d_j(h) s^j), 
 \end{array} 	
 \end{equation*}
where $d_j(h)$ is also a polynomial of $h$.
In this case, this branch continuously tends  to a branch  of $Y^{N-1}\sum_{j=N}^{n+1} a_j Y^{j-N} +X^{n-1}=0$ on $\overline{L}_0$ as $h \rightarrow 0$, with a Newton polygon as shown in the right picture in Figure \ref{NP} at the origin.

\begin{figure}[htbp]
\centering
	\includegraphics[]{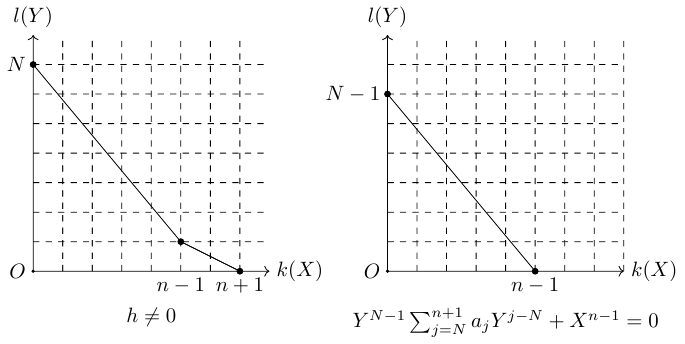}
	\caption{{Newton polygon at the point $[1:0:0]$}}\label{NP}
\end{figure}

In a neighborhood of the point $\mathrm{P}_x$, 
   the fiber bundle defined by $H(x,y)=xy+H_{n+1}(x,y)$ can be extended to $h=0$ and is locally trivial, by Lemma \ref{lem-lambda}.  
A small circle $\delta_0$ centered at $\mathrm{P}_x$  on $y=0$ can be deformed
 to  a small circle $\delta_h$ centered at $\mathrm{P}_x$  on the branch determined by parameterization \eqref{1-2-P} on $L_h$. 
 Noticing that the branch $y=0$ is a topological punctured sphere and choosing another small  circle centered at $O$ on $y=0$ to represent the vanishing cycle $\gamma_0$, we get a topological cylinder $C_0$ with boundaries $\delta_0$ and $\gamma_0$, on which there is no singular points. Then the deformation $C_h$ of  $C_0$ on $L_h$  is  homeomorphic to $C_0$ for any $h$ sufficiently close to $0$. This  means that $\delta_h$ and $\gamma_h$ are boundaries of a topological cylinder $C_h$. In other words,  the cycle $\gamma_h$ is homologous to $\delta_h$ that is a zero cycle on the Riemann surface of $\overline{L}_h$.

Clearly the period $1$-form $\omega $ has no finite poles on $L_h$ for any  $h\in \mathbb{C}-A_{H}$. Consequently, 
 the period function $T(h)$  
	 is equal to the  residue of  $\omega  $ at $\mathrm{P}_x$  on the branch parameterized by \eqref{1-2-P}. 	
	   By substituting this parameterization into the equation (\ref{vf-inf}), we obtain that $$\omega=-\frac{ds}{s+h^{N-1}Na_{N}s^{2N-n}+o(s^{2N-n})}.$$
From the condition $N>(n+1)/2$, it follows that the residue of the $1$-form $\omega$ is equal to $-1$ at $\mathrm{P}_x$ and that $T(h)=2\pi {\mathrm i}$, according to   the orientation of $\gamma_h$. That is,  the singular point  $(x,y)=(0,0)$ is isochronous.

 The sufficiency  of condition (II) of the theorem is analogous to that of condition (I).  Finally, the sufficiency part of the theorem is proved.
	\end{proof}


At the end, we provide an alternative proof of the sufficiency part of Theorem \ref{th-main}.
According to the results in \cite{Gonsa}, the nonlinearities of a planar polynomial systems 
\begin{equation*}
\begin{array}{ccrl}
  	\frac{d x}{d t} & =  & p x+\sum_{k+j>1} a^{(1)}_{kj} x^k y^j,& 
  	\\ 
\frac{d y}{d t}  &= & qy +\sum_{k+j>1} a^{(2)}_{kj} x^k y^j,& \quad p,q\in\mathbb{Z}-\{0\},
\end{array}
\end{equation*}
are  {\emph{admissible}}, if all linear combinations with non-negative integer coefficients (not all zero) of $pk+qj-p$ for $a^{(1)}_{kj}\neq 0$ and  $pk+qj-q$ for $a^{(2)}_{kj}\neq 0$  are nonzero.
When the nonlinearities are admissible, the above system is linearizable near the origin. 

\begin{proof}[Another proof of the  sufficiency of Theorem \ref{th-main}.]
A direct calculation indicates that  the system (\ref{m}) possesses  admissible nonlinearities 
 $\frac{\partial H_{n+1}}{\partial y}$ and $-\frac{\partial H_{n+1}}{\partial x}$. Consequently it is linearizable. For instance,
under the first condition in the theorem, 
we have  $$H_{n+1}(x,y)=\sum_{j>\frac{n+1}{2}}a_j x^{n+1-j}y^j, $$
then the nonlinearities of system (\ref{m}) can be expressed in the following forms: $$\frac{\partial H_{n+1}}{\partial y}=\sum_{j>\frac{n+1}{2}}j a_j x^{n+1-j}y^{j-1}, \  -\frac{\partial H_{n+1}}{\partial x}=-\sum_{j>\frac{n+1}{2}}(n+1-j) a_j x^{n-j}y^{j},$$
such that $p(n+1-j)+q(j-1) -p<0$ and $p(n-j)+q j -q<0$ for all $j>(n+1)/2$, where $p=1$ and  $q=-1$ are the eigenvalues of system (\ref{m}) at the origin. This implies that all linear combinations with non-negative integer coefficients (not all zero) of  $p(n+1-j)+q(j-1) -p$ and $p(n-j)+q j -q$ are negative.
The second condition also implies that the nonlinearities are admissible, since $(n+1-j)-(j-1) -1>0$ and $(n-j)-j +1>0$ for all $j<(n+1)/2$.
\end{proof}

\begin{remark}
It appears challenging to address all the conditions presented in  Theorem \ref{th-main} through the classical normal form methods. However, the topology approach developed herein is able to obtain these conditions effectively, without relying on computer algebra techniques. Moreover, our approach can also provide valuable insights into the polynomial Hamiltonian function, as illustrated in the paper.
\end{remark}



	\section*{Acknowledgement}

The first author was partially supported by the China Scholarship Council (No. 202306780018).
The second author was partially supported by the National Natural Science Foundation of China   (No. 12071006).


\end{document}